\theoremstyle{plain}
\newtheorem{theorem}{Theorem}[section]
\newtheorem{lemma}[theorem]{Lemma}
\newtheorem{proposition}[theorem]{Proposition}
\theoremstyle{definition}
\newtheorem{example}[theorem]{Example}
\newtheorem{remark}[theorem]{Remark}
\DeclareMathOperator{\dimh}{dim_H}
\DeclareMathOperator{\tr}{\mathrm{tr}}
\newcommand{\modulo}{\ (\mathrm{mod}\ 1)}
\begin{document}

\title[On shrinking targets for toral endomorphisms]{On shrinking targets for linear expanding and hyperbolic
  toral endomorphisms}

\author{Zhang-nan Hu}
\address{Z.-N.~Hu, College of Science, China University of Petroleum, Beijing 102249, P. R. China}
\email{hnlgdxhzn@163.com} 

\author{Tomas Persson}
\address{Tomas Persson, Centre for Mathematical Sciences, Lund University, Box 118, 221 00 Lund, Sweden}
\email{tomasp@gmx.com}
	
\author{Wanlou Wu}
\address{Wanlou Wu, School of Mathematics and Statistics, Jiangsu
  Normal University, Xuzhou, 221116, China}
\email{wuwanlou@163.com}
	
\author{Yiwei Zhang}
\address{Yiwei Zhang,
School of Mathematics and Statistics, Center for Mathematical Sciences,
Hubei Key Laboratory of Engineering Modeling and Scientific Computing,
Huazhong University of Science and Technology, Wuhan 430074, China}
\email{yiweizhang@hust.edu.cn}

\date{\today}	
	
\thanks{Zhangnan Hu
  is supported by Science Foundation of China University of
  Petroleum, Beijing (Grant No. 2462023SZBH013) and China
  Postdoctoral Science Foundation (Grant No. 2023M743878).
  Wanlou Wu is supported by NSFC 12001245 and Natural
  Science Foundation of Jiangsu Province BK20201025.
  Yiwei Zhang is partially supported by NSFC 
  Nos. 12161141002  and 12271432.
  \\
  We thank Mikael Persson Sundqvist for helping us with
  Figure~\ref{fig:complicatedellipse}.  }

\subjclass[2020]{37C45, 37E05}
	
\setlength{\footskip}{18pt}

\begin{abstract}
  Let $A$ be an invertible $d\times d$ matrix with integer
  elements. Then $A$ determines a self-map $T$ of the
  $d$-dimensional torus
  $\mathbb{T}^d=\mathbb{R}^d/\mathbb{Z}^d$. Given a real number
  $\tau>0$, and a sequence $\{z_n\}$ of points in $\mathbb{T}^d$,
  let $W_\tau$ be the set of points $x\in\mathbb{T}^d$ such that
  $T^n(x)\in B(z_n,e^{-n\tau})$ for infinitely many
  $n\in\mathbb{N}$. The Hausdorff dimension of $W_\tau$ has
  previously been studied by Hill--Velani and
  Li--Liao--Velani--Zorin. We provide complete results on the
  Hausdorff dimension of $W_\tau$ for any expanding matrix.  For
  hyperbolic matrices, we compute the dimension of $W_\tau$ only
  when $A$ is a $2 \times 2$ matrix. We give counterexamples to a
  natural candidate for a dimension formula for general dimension
  $d$.
\end{abstract}

\maketitle

\section{Introduction}

Let $T \colon X \to X$ be a measure preserving transformation on
a metric space $X$ which is equipped with an ergodic Borel probability
measure $m$.
For any fixed subset $B\subset X$ of positive measure,
Birkhoff's ergodic theorem implies that
\[
  \{\, x \in X : T^nx\in B \text{ for infinitely many
  }n\in\mathbb{N} \,\}
\]
has $m$-measure 1.  Hill and Velani \cite{HV1995} considered this
set when $B = B(n)$ is a ball that shrinks with time $n$. They
called the points in the set the \emph{well-approximable} points
in analogy with the classical theory of metric Diophantine
approximation \cite{Ca1957,VG1979}, in particular the
Jarn\'{\i}k--Besicovitch theorem \cite{J1929, B1934}, and
introduced the so called \emph{shrinking target problems}: if at
time $n$ one has a ball $B(n) = B (x_0, r_n)$ centred at $x_0$
with radius $r_n \to 0$, then what kind of properties does the
set of points $x$ have, whose images $T^n(x)$ are in $B(n)$ for
infinitely many $n$?

There are plenty of related works such as the so called
quantitative recurrence properties \cite{B1993}, dynamical
Borel--Cantelli lemma \cite{CK2001}, shrinking target problems
\cite{F2006,T2008}, uniform Diophantine approximation
\cite{BL2016}, recurrence time \cite{BS2001}, waiting time
\cite{G2005}, etc. We refer to the survey article by Wang and Wu
\cite{WangWuSurvey} for more information.

Let $A$ be a $d\times d$ matrix with integral coefficients, let
$X$ be the $d$ dimensional torus
$\mathbb{T}^d=\mathbb{R}^d/\mathbb{Z}^d$ and let
$T \colon \mathbb{T}^d\rightarrow\mathbb{T}^d$ be a
transformation of $\mathbb{T}^d$ defined by
\[
  T(x) = A x \mod 1.
\]
Given $\tau>0$, and a sequence $\{\, z_n : n\in\mathbb{N} \,\}$
of points in the $d$ dimensional torus $\mathbb{T}^d$, we
consider the set
\begin{align*}
  W_\tau &= \{\, x \in \mathbb{T}^d : T^n(x)\in B(z_n,e^{-n\tau})
           \text{ for infinitely many }n\in\mathbb{N} \,\} \\
  &=\limsup_{n\in\mathbb{N}}T^{-n}B(z_n,e^{-n\tau}).
\end{align*}

Let $\mu$ denote the $d$-dimensional normalized Lebesgue measure
on $\mathbb{T}^d$. In this paper, the radii of the balls decay
exponentially as $e^{-\tau n}$. Li et al.\ \cite{Lietal} also
studied other decay rates and they gave conditions when the
Lebesgue measure of the corresponding set is $0$ or $1$.  We will
only study the exponential decay rate, and we note that by the
Borel--Cantelli lemma, it follows immediately that for any
$\tau>0$, we have $\mu(W_\tau)=0$.

Since $\mu(W_\tau) = 0$, it is natural to calculate the Hausdorff
dimension of the set $W_\tau$. This is the problem that we will deal
with in this paper. It has previously been studied by Hill and
Velani \cite{HillVelaniMatrix} and by Li, Liao, Velani and Zorin
\cite[Theorem~8 and 9]{Lietal}. Our results are similar to some
of those in the mentioned papers, but our assumptions are
somewhat different, and we also prove a large intersection
property of the set $W_\tau$. The main difficulty in the problem
of determining the Hausdorff dimension of $W_\tau$ is that
$W_\tau$ is a limsup set of ellipsoids that are increasingly more
eccentric.

Li et al.\ used mass transference to obtain their result. We use
the geometry of the involved sets to obtain estimates on measures
which leads to our result through results on Riesz energies.

Our results about hyperbolic endomorphisms
are only for $d = 2$, but we give complete results for any
expanding endomorphism when $d\ge 2$.
We also point out an error in Theorem~1 of Hill and
Velani \cite{HillVelaniMatrix}, see
Example~\ref{ex:errorinHillVelani} below.

\section{Results}

Our results will involve the eigenvalues of the matrix
$A$. Throughout this text, we denote by
$\lambda_1, \ldots,\lambda_d$ the eigenvalues of $A$ counted with
multiplicity and ordered so that
\[
  0 < \lvert \lambda_1 \rvert \leq \lvert \lambda_2 \rvert \leq
  \ldots \leq \lvert \lambda_d \rvert.
\]
Hence, we will always assume that $A$ is invertible.
Put $l_j = \log \lvert \lambda_j \rvert$, $j = 1,2,\ldots,d$.

With this notation, we have the following theorems.

\begin{theorem} \label{the:dimensionofW1} Let $A$ be a
  $2\times 2$ integer matrix with eigenvalues
  $0 < |\lambda_1|<1<|\lambda_2|$. Then 
  \[
  \dimh W_\tau = s_{\tau}
    = \left\{ \begin{array}{ll}
                \dfrac{2 l_2}{\tau + l_2}, & 0 < \tau < -l_1, \\
                \min\biggl\{ \dfrac{l_1 + l_2}{\tau + l_1},
                \dfrac{2 l_2}{\tau + l_2} \biggr\}, &  -l_1 < \tau.
                \rule{0pt}{25pt}
              \end{array}
            \right.         
           \]
             Moreover, $W_\tau$ has large intersection property in the sense
  that $W_\tau \in \mathscr{G}^{s_{\tau}}$. (See
  Section~\ref{hmd} for the definition of\/
  $\mathscr{G}^{s_{\tau}}$).
\end{theorem}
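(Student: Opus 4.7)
The plan is to prove the upper and lower bounds on $\dim_H W_\tau$ by separate arguments, and then to upgrade the lower bound to the $\mathscr{G}^{s_\tau}$ statement.

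\emph{Geometric setup and upper bound.} Write $E_n := T^{-n}B(z_n, e^{-n\tau})$, so that $W_\tau = \limsup E_n$. Lifting to $\mathbb{R}^2$, the map $A^{-n}$ stretches by $e^{-n l_1}$ in the stable eigendirection and contracts by $e^{-n l_2}$ in the unstable one, so each component of $E_n$ is an ellipse with semiaxes $e^{-n(\tau + l_1)}$ and $e^{-n(\tau + l_2)}$. Two regimes appear on the torus: if $\tau < -l_1$ the long axis exceeds $1$ and $E_n$ becomes a union of parallel strips of width $\asymp e^{-n(\tau + l_2)}$; if $\tau > -l_1$ the preimage is a disjoint union of $|\det A|^n = e^{n(l_1 + l_2)}$ genuine small ellipses. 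For the upper bound I exploit two natural scales of square covers of $E_n$. Using squares of side $e^{-n(\tau + l_2)}$, a straightforward area/strip-length count shows that $\asymp e^{2n l_2}$ squares suffice in both regimes, giving convergence of $\sum_n e^{n(2 l_2 - s(\tau + l_2))}$ exactly when $s > \tfrac{2 l_2}{\tau + l_2}$. When $\tau > -l_1$, a coarser cover by one square of side $e^{-n(\tau + l_1)}$ per ellipse gives $e^{n(l_1 + l_2)}$ squares, producing convergence when $s > \tfrac{l_1 + l_2}{\tau + l_1}$. Taking the minimum yields $\dim_H W_\tau \leq s_\tau$.

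\emph{Lower bound via Riesz energy.} To show $\dim_H W_\tau \geq s_\tau$, I would construct a probability measure $\nu$ on $W_\tau$ whose Riesz $s$-energy $I_s(\nu) = \iint |x - y|^{-s}\, d\nu(x)\, d\nu(y)$ is finite for every $s < s_\tau$; Frostman's lemma then finishes. A natural candidate for $\nu$ is a weak-$*$ subsequential limit of suitable averages of the normalized Lebesgue measures $\mathbf{1}_{E_n}/\mu(E_n)$, which is automatically supported on $\limsup E_n = W_\tau$. Off-diagonal energy contributions $\iint_{E_n \times E_m} |x - y|^{-s}\, d\mu\, d\mu$ with $n \neq m$ are controlled by integrating the Riesz kernel against one of the sets and invoking the uniform Lebesgue bound $\mu(E_n) \asymp e^{-2n\tau}$. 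The diagonal contributions are the essential ones: the short semiaxis $e^{-n(\tau + l_2)}$ yields the critical exponent $\tfrac{2 l_2}{\tau + l_2}$, while the long semiaxis $e^{-n(\tau + l_1)}$ combined with the count $e^{n(l_1 + l_2)}$ of ellipses yields the critical exponent $\tfrac{l_1 + l_2}{\tau + l_1}$, matching the upper bound on each side of $\tau = l_2$. To obtain $W_\tau \in \mathscr{G}^{s_\tau}$, one localises: since the strips wrap densely (in the wrapping regime) and the $|\det A|^n$ ellipses equidistribute (in the non-wrapping regime), the Frostman measure can be produced inside any open subset of $\mathbb{T}^2$, after which the standard criterion for membership in $\mathscr{G}^{s_\tau}$ applies.

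\emph{Main obstacle.} The delicate regime is $\tau > l_2$, where $s_\tau = \tfrac{l_1 + l_2}{\tau + l_1}$ and the ellipses are extremely eccentric. There $E_n$ contains no disk of radius comparable to its long semiaxis, so a ``disk limsup'' argument fails; the lower bound must instead exploit the fact that the $\approx e^{n(l_1 + l_2)}$ translates, \emph{collectively}, mimic a family of disks of radius $e^{-n(\tau + l_1)}$ at scales $\geq e^{-n(\tau + l_1)}$. Quantifying this geometric spread through the Riesz energy, without invoking a black-box mass transference statement, is the technical heart of the proof.
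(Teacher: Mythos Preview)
Your upper bound argument is correct and matches the paper's, which obtains it as a special case of Theorem~\ref{upperboundofW}.

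The lower bound plan contains a genuine error. You write that a weak-$*$ subsequential limit of averages of $\mu_n = \mathbf{1}_{E_n}/\mu(E_n)$ is ``automatically supported on $\limsup E_n = W_\tau$.'' This is false: when $\mu_n$ converges weakly to Lebesgue measure (which is exactly what happens here, because the strips/ellipses equidistribute), any weak-$*$ limit of averages is again Lebesgue measure, supported on all of $\mathbb{T}^2$ and not on the null set $W_\tau$. The passage from energy control on the $\mu_n$ to a Frostman measure on $\limsup E_n$ is not accomplished by taking limits; it requires a separate construction, packaged in this paper as Lemma~\ref{lemma:persson}, whose proof builds a Cantor-type subset of $\limsup E_n$ using the energy bound as input. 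The paper in fact works with the stronger pointwise hypothesis $\mu_n(B(x,r)) \le C r^s$ (Lemma~\ref{lemmaforlb}) and then verifies \emph{two} things: the uniform ball bound (Lemma~\ref{lem:ballestimate}), and the two-sided weak-convergence condition~\eqref{munballn} (Lemma~\ref{lem:weaklimit}). Your off-diagonal/diagonal energy discussion is at best a proxy for the first of these; the second is never addressed.

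Equally important, your sketch omits the algebraic input that makes the geometry quantitative. The phrases ``strips wrap densely'' and ``the $|\det A|^n$ ellipses equidistribute'' are not automatic: they rest on Lemma~\ref{lem:algebraic}, which shows that the eigendirections of a hyperbolic $2\times 2$ integer matrix have quadratic-irrational slope, hence are badly approximable. The paper uses this twice. Liouville's inequality gives a \emph{lower} bound $\gtrsim e^{(\tau-l_2)n}$ on the separation between adjacent strips of $T^{-n}B(z_n,e^{-n\tau})$, which is exactly what drives the ball estimate $\mu_n(B(x,r)) \le C r^{s_\tau}$ in both the wrapping regime and the regime $\tau > -l_1$; and the three-distance theorem for constant-type irrationals yields the two-sided count $\#(A^n B \cap \mathbb{Z}^2) \asymp r^2 e^{n(l_1+l_2)}$ needed for~\eqref{munballn}. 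Without bad approximability the strips could cluster, and both the ball estimate and the equidistribution would fail. Your ``main obstacle'' paragraph correctly identifies the eccentric regime $\tau > l_2$ as the hard case, but the actual resolution is this Diophantine separation, not a direct manipulation of Riesz energies on the individual ellipses.
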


We remark that the dimension formula above differs from the one
for recurrence in Hu and Persson \cite{HuPersson}. In
one-dimensional cases the dimension formulae are often the same
for hitting and recurrence, see \cite{Lietal,WT}.

\begin{remark}
  In Theorem~\ref{the:dimensionofW1}, for the critical case
  $\tau = -l_1$, the dimension of $W_\tau$ if simpler to describe
  when $|\det A| > 1$. When $|\det A|>1$, we have
  $\dimh W_{-l_1} = \frac{2 l_2}{ l_2-l_1}$, and the dimension of
  $W_\tau$ is continuous as a function of $\tau$. We refer to the
  proof of Theorem~\ref{the:dimensionofW1}.

  When $|\det A|=1$ and $\lambda$ is an eigenvalue with
  $|\lambda| > 1$, we have by Theorem~\ref{the:dimensionofW1}
  that
  \[
    \dimh W_\tau = \left\{ \begin{array}{ll} \dfrac{2
          \log|\lambda|}{\tau + \log|\lambda|}, & 0 <
                                                 \tau <  \log|\lambda|, \\
                             0, &  \log|\lambda| < \tau.
                                 \rule{0pt}{15pt}\end{array}
                             \right.  .
  \]
  It is interesting (but natural) that the dimension formula
  above is not continuous as a function of
  $\tau$. Figure~\ref{fig:cat} shows the graph of $\dimh W_\tau$
  as a function of $\tau$ when
  $A= \bigl( \begin{smallmatrix}2 & 1 \\ 1 & 1 \end{smallmatrix}
  \bigr)$.
 
  \begin{figure}
    \begin{center}
      \includegraphics{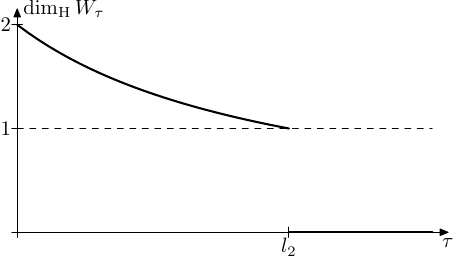}
    \end{center}
    \caption{The graph of $\tau \mapsto \dimh W_\tau$ for the cat
      map, i.e.\ when
      $A= \bigl( \protect\begin{smallmatrix}2 & 1 \protect \\ 1 &
        1 \protect\end{smallmatrix} \bigr)$.} \label{fig:cat}
  \end{figure}

The dimension of $W_\tau$ when $\tau = \log |\lambda|$ depends
  on the choice of $z_n$. It is possible to choose $z_n$ so
  that $W_\tau$ is a line segment, and hence $\dimh W_\tau = 1$,
  and it is possible to choose $z_n$ so that $W_\tau = \emptyset$
  and $\dimh W_\tau = 0$.
\end{remark}

We now turn to the case of a torus of general dimension $d$. We prove an upper bound on the dimension of $W_\tau$.

\begin{theorem}
  \label{upperboundofW}
  If $A$ is an invertible $d\times d$ integer matrix, then for
  any $\tau\ge0$, we have
  \begin{equation} \label{eq:upperbound}
    \dimh W_\tau \leq \min_k \left \{ \dfrac{kl_k + \sum_{j>k}
        l_j}{\tau + l_k} \right \},
  \end{equation}
  where the minimum is over those $k$ for which $\tau+l_k>0$.
\end{theorem}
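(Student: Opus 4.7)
The plan is to cover $W_\tau$ directly by the natural family $T^{-n}B(z_n,e^{-n\tau})$ and to count an efficient covering of each preimage by cubes whose side-length is tailored to the chosen index $k$.

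\emph{Step 1. Geometry of the preimage.} First I would lift to $\mathbb{R}^d$ and write $T^{-n}B(z_n,e^{-n\tau})$ as a disjoint union of $|\det A|^n$ bodies, each a translate of $A^{-n}B(0,e^{-n\tau})$. The latter is the image of a round ball under $A^{-n}$, hence an ellipsoid whose semi-axes are the reciprocals of the singular values of $A^n$. By the basic Gelfand-type asymptotics for matrix powers, the $j$th singular value of $A^n$ equals $e^{nl_{d-j+1}}$ up to a sub-exponential factor, so each body lies (in suitable orthonormal coordinates) in an axis-aligned box with side-lengths comparable to $e^{-n(\tau+l_j)}$, $j=1,\ldots,d$.

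\emph{Step 2. An efficient cube cover.} Fix $k$ with $\tau+l_k>0$ and set $r_n:=e^{-n(\tau+l_k)}$. I would cover each body by cubes of side $r_n$: along a direction $j\le k$ the side-length exceeds $r_n$ and requires $\asymp e^{n(l_k-l_j)}$ cubes, while along a direction $j>k$ one cube suffices. Multiplying over $j$ gives $\asymp e^{n(kl_k-\sum_{j\le k}l_j)}$ cubes per body, and since there are $|\det A|^n=e^{n\sum_j l_j}$ bodies the whole preimage is covered by at most
\[
  N_n \le C_d\, e^{n\left(kl_k+\sum_{j>k}l_j\right)}
\]
cubes of diameter at most $\sqrt{d}\,r_n$.

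\emph{Step 3. From covers to dimension.} Since $W_\tau\subseteq\bigcup_{m\ge n}T^{-m}B(z_m,e^{-m\tau})$ for every $n$, the previous step provides a $\delta_n$-cover of $W_\tau$ with $\delta_n\to 0$. Its $s$-dimensional Hausdorff pre-measure is at most $\sum_{m\ge n}N_m(\sqrt{d}\,r_m)^s$, a geometric tail which vanishes as $n\to\infty$ provided $kl_k+\sum_{j>k}l_j-s(\tau+l_k)<0$. This yields $\dimh W_\tau\le (kl_k+\sum_{j>k}l_j)/(\tau+l_k)$ for every admissible $k$, and minimising over $k$ gives \eqref{eq:upperbound}.

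The main obstacle will be Step 1 when $A$ is not diagonalisable over $\mathbb{R}$ (complex eigenvalues) or has a non-trivial Jordan block, because then $A^{-n}B(0,e^{-n\tau})$ is not literally an ellipsoid aligned with eigenvectors. The way around this is to use singular values of $A^n$ rather than eigenvectors; these agree with $|\lambda_j|^n$ up to at most a polynomial-in-$n$ factor. Such polynomial corrections enter $N_n$ multiplicatively but are swamped by the exponential factor in $r_n^s$, so they have no effect on the exponent determining the Hausdorff dimension, and the clean bound \eqref{eq:upperbound} survives.
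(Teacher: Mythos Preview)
Your proposal is correct and follows essentially the same route as the paper: cover each of the $|\det A|^n$ ellipsoidal pieces of $T^{-n}B(z_n,e^{-n\tau})$ by cubes of side $e^{-n(\tau+l_k)}$, count, and optimise over $k$. The paper handles the sub-exponential discrepancy between singular values and $|\lambda_j|^n$ via an $\varepsilon$-parameter (its Lemma~4.1) rather than your polynomial-correction remark, but the content is the same; one cosmetic point is that the $|\det A|^n$ bodies need not be \emph{disjoint} on the torus when some $l_j<0$, though for an upper bound this is harmless.
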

Hill and Velani \cite[Theorem~1 and 2]{HillVelaniMatrix} state
that under some assumptions, there is equality in
\eqref{eq:upperbound}. As we will show in
Examples~\ref{ex:errorinHillVelani} and \ref{ex:last}, those
statements are incorrect for hyperbolic matrices. These examples
also show that the lower bound mentioned immediately after Hill's
and Velani's Theorem~2, is not always correct. However, the
following theorem shows that their statements hold for expanding
toral endomorphisms.

We further state a complete dimension formula for expanding
endomorphisms. We impose, however, the stronger assumption that
all eigenvalues are outside the unit circle, without which the
result may fail.

\begin{theorem} \label{the:dimensionofW2}
  Let $A$ be a $d\times d$ integer matrix.  Assume that $l_i>0$
  for $1\le i\le d$. Then 
  \[
   \dimh W_{\tau}= \tilde{s}_\tau := \min_{1\le k\le
      d} \Biggl\{ \frac{k l_ k + \sum_{j > k} l_j - \sum_{j = 1}^d
        (l_j - l_k - \tau)_+}{\tau + l_k} \Biggr\},
  \]
  and $(x)_+ = \max \{0, x \}$.
  Moreover, $W_{\tau}\in\mathscr{G}^{\tilde{s}_\tau},$
\end{theorem}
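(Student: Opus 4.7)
The plan is to prove matching upper and lower bounds and to extract the large intersection property from the measure-theoretic construction used for the lower bound.

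For the upper bound, I would sharpen the covering argument of Theorem~\ref{upperboundofW} by exploiting the dense clustering of preimages in the most expanding directions. Fix $k \in \{1, \ldots, d\}$ and let $m_k$ be the largest index with $l_{m_k} \le l_k + \tau$; after rearranging the $(\cdot)_+$ terms one obtains the equivalent expression
\[
 \tilde s_\tau^{(k)} = (d - m_k) + \frac{k l_k + \sum_{k < j \le m_k} l_j}{\tau + l_k}.
\]
Working in linearised coordinates adapted to the real Jordan form of $A$, I would cover $W_\tau$ at level $n$ by axis-aligned boxes of side $\rho_n = e^{-n(\tau + l_k)}$ and count them direction by direction. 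For $j \le k$ the preimage translates are spaced by $e^{-n l_j} \ge \rho_n$ and each covers $e^{n(l_k - l_j)}$ intervals, giving $e^{n l_k}$ in total; for $k < j \le m_k$ the preimages are still wider apart than $\rho_n$ but each now fits in a single box, giving $e^{n l_j}$; for $j > m_k$ the $e^{n l_j}$ preimage translates are spaced more closely than $\rho_n$, so every one of the $\rho_n^{-1} = e^{n(\tau + l_k)}$ intervals is hit. Multiplying across directions gives a total box count of $\exp(n(k l_k + \sum_{k < j \le m_k} l_j + (d - m_k)(\tau + l_k)))$, and the standard Hausdorff content computation yields $\dimh W_\tau \le \tilde s_\tau^{(k)}$. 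The saturation in the third regime is precisely what is missing from Theorem~\ref{upperboundofW} and accounts for the strict improvement; taking the minimum over $k$ gives $\dimh W_\tau \le \tilde s_\tau$. Polynomial Jordan factors present in the non-diagonalisable case do not alter the exponential rates and are absorbed into constants.

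For the lower bound and the large intersection property, following the paper's stated methodology I would, for each open $V \subset \mathbb{T}^d$, construct a Borel measure $\mu = \mu_V$ supported on $W_\tau \cap V$ and show that its Riesz $s$-energy
\[
 I_s(\mu) = \iint |x - y|^{-s} \, d\mu(x) \, d\mu(y)
\]
is finite for every $s < \tilde s_\tau$. The measure would arise from a Cantor-type construction along a rapidly increasing subsequence $n_i$: at each level, mass is distributed over a subcollection of the preimage ellipsoids $T^{-n_i} B(z_{n_i}, e^{-n_i \tau})$ selected to lie inside $V$ and to be disjoint from the complement of earlier stages, with the equidistribution of preimages under the expanding map $T$ supplying enough such ellipsoids in any $V$ at large $n_i$. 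The key geometric input is a scale-by-scale pointwise bound: for $r$ in the range $[e^{-n(\tau + l_{k+1})}, e^{-n(\tau + l_k)}]$ one verifies $\mu(B(x, r)) \le C r^s$ with the exponent dictated by $\tilde s_\tau^{(k)}$, and the identity $\tilde s_\tau = \min_k \tilde s_\tau^{(k)}$ makes the bound uniform across all scales. Finite energy then yields $\dimh(W_\tau \cap V) \ge s$ via Frostman's lemma, and the uniform availability of such measures on every open set $V$ gives $W_\tau \in \mathscr{G}^{\tilde s_\tau}$ through the energy characterisation of the large intersection class.

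The main obstacle is this scale-by-scale measure estimate. Because the preimage ellipsoids have $d$ widely separated semi-axes $e^{-n(\tau + l_j)}$, the local structure of $\mu$ near $x$ changes character each time $r$ crosses one of the scales $e^{-n(\tau + l_k)}$; verifying $\mu(B(x, r)) \le C r^s$ forces one to match the direction-by-direction count from the upper bound against the mass produced by the Cantor construction, while also ensuring that contributions from distant levels $n_i \ne n_j$ do not corrupt the energy integral. Maintaining uniformity in $V$ for the large intersection conclusion adds bookkeeping but no fundamentally new ideas.
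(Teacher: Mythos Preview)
Your upper bound is the paper's argument in different bookkeeping: the paper also covers by balls of radius $e^{-n(l_k+\tau)}$ and observes that in directions $j$ with $l_j > l_k + \tau$ the preimage ellipsoids are spaced more closely than the covering radius, so each covering ball hits many ellipsoids; your ``saturation'' count and the paper's reduction factor $\prod_j e^{-(l_j - l_k - \tau)_+ n}$ are the same computation.

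For the lower bound and the $\mathscr{G}^s$ conclusion your route diverges from the paper's, and the paper's is considerably simpler. You propose, for each open $V$, a Cantor-type measure on $W_\tau \cap V$ along a subsequence $n_i$, with the attendant multi-level bookkeeping that you yourself flag as the main obstacle. The paper avoids all of this: it works directly with the sequence $\mu_n = c_d\, e^{d\tau n}\, \mu|_{E_n}$ of normalised Lebesgue measures on $E_n = T^{-n}B(z_n, e^{-\tau n})$ and invokes Lemma~\ref{lemmaforlb}. Once one checks that $\mu_n$ satisfies \eqref{munballn} (immediate here, since all $l_j > 0$ forces $\mu_n \to \mu$ weakly) and that $\mu_n(B(x,r)) \le C r^{\tilde s_\tau}$ uniformly in $n$, that lemma yields both $\dimh W_\tau \ge \tilde s_\tau$ and $W_\tau \in \mathscr{G}^{\tilde s_\tau}$ simultaneously --- no Cantor set, no subsequence, no localisation to $V$. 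The ball estimate for $\mu_n$ does require a finer scale decomposition than you indicate: the transitions occur at all of $\mathcal{A} = \{l_j,\, l_j + \tau : 1 \le j \le d\}$, i.e.\ at the spacing scales $e^{-n l_j}$ as well as the semi-axis scales $e^{-n(l_j+\tau)}$, and the identification of the resulting piecewise exponents with the minimum $\tilde s_\tau$ is carried out via Proposition~\ref{simple}, a reformulation of $\tilde s_\tau$ borrowed from Wang--Wu.
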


Li et al.\ \cite[Theorem~8 and 9]{Lietal} obtained dimension
formulae under some conditions on the matrix $A$. They assumed
that either $A$ is an integer matrix, diagonalisable over
$\mathbb{Z}$, with all eigenvalues outside the unit circle, or
that $A$ is diagonal, not necessarily an integer matrix, and that
all eigenvalues are outside the unit circle. The novelty in
Theorem~\ref{the:dimensionofW2} is that 
we do not need any assumption on diagonalisability over
$\mathbb{Z}$, but on the other hand, we only treat integer
matrices.

One could possibly expect that the lower bound of
Theorem~\ref{the:dimensionofW2} holds also without any
assumptions on the eigenvalues. Although this might be the case
in certain cases, it is not always so. To explain what can go
wrong, we define a probability measure $\mu_n$ supported on
$E_n = T^{-n} B (z_n, e^{-n\tau})$ by
\[
  \mu_n = c_d e^{nd\tau} \mu|_{E_n},
\]
where $c_d$ is a constant that depends only on $d$. In words,
$\mu_n$ is the normalised restriction of the Lebesgue measure to
the $n$-th inverse image of $B (z_n, e^{-\tau n})$.

It turns out that when $\mu_n$ does not converge weakly to the
Lebesgue measure, then the lower bound in
Theorem~\ref{the:dimensionofW2} can fail. In the proof of
Theorem~\ref{the:dimensionofW1}, we will see (and use) that
$\mu_n$ converges weakly to the Lebesgue measure, unless in the
case when $\dimh W_\tau = 0$. Hence, in the case of
Theorem~\ref{the:dimensionofW1}, we do not need the assumption
that $\mu_n$ converges weakly to the Lebesgue measure, but it
seems that such an assumption is needed in the general case.

If $A$ has an eigenvalue on the unit circle, then it is sometimes
possible to, given any $\tau > 0$, choose $z_n$ such that
$W_\tau = \emptyset$, and $\mu_n$ will not converge to the
Lebesgue measure.  The following example shows a particular
instance of this. In Section~\ref{sec:examples} there are further
examples of things that can go wrong if the assumptions are not
fulfilled. In particular, the examples in
Section~\ref{sec:examples} show that it is not enough to assume
that there are no eigenvalues on the unit circle, in order for the
bound in Theorem~\ref{the:dimensionofW2} to hold.

\begin{example}
  \label{ex:errorinHillVelani}
  Assume that $b$ is an integer, and define the $2\times 2$
  matrix
  \[
    A = \begin{pmatrix}1 & 0 \\ 0 & b \end{pmatrix}.
  \]
  If $z_n=(x_n,y_n)$ with $x_n\in\mathbb{T}$ and
  $y_n\in\mathbb{T}$, then
  \[
    T^{-n}B(z_n,e^{-n\tau}) \subset [x_n-e^{-n\tau},x_n +
    e^{-n\tau}] \times \mathbb{T}.
  \]
  From this it is clear that we can choose $x_n$ such that
  \[
    \limsup_{n\in\mathbb{N}} [x_n - e^{-n\tau}, x_n + e^{-n\tau}]
    = \emptyset.
  \]
  We then also have
  \[
    W_\tau \subset \limsup_{n \in \mathbb{N}} ([x_n - e^{-n\tau},
    x_n + e^{-n\tau}] \times \mathbb{T}) = \emptyset.
  \]
  The dimension formula in \cite{HillVelaniMatrix} is not quite
  correct in this and similar cases, such as when
  \[
    A = \begin{pmatrix}1 & 0 \\ 0 & B \end{pmatrix},
  \]
  and $B$ is any $(d-1)\times(d-1)$ integer matrix.  In
  particular, the main result of Hill and Velani
  \cite[Theorem~1]{HillVelaniMatrix} is not correct as stated,
  see also Example~\ref{ex:last}.
\end{example}

The paper is organised as follows. Section~\ref{hmd} is devoted
to preliminaries, where we recall the definition of the Hausdorff
dimension and give a critical lemma.  In this paper, we deal with
two kinds of endomorphisms, where
Theorems~\ref{the:dimensionofW1} and \ref{upperboundofW} are for
hyperbolic endomorphisms, and Theorem~\ref{the:dimensionofW2} is
for expanding endomorphisms. In Section~\ref{upperbound}, we give
the proofs of Theorem~\ref{upperboundofW} and the upper bound in
Theorem~\ref{the:dimensionofW2}.  The strategy of dealing with
the lower bound for these two kinds of endomorphisms is
different, so the proofs are divided into two
sections. Section~\ref{sec:lowerboundhyperbolic} contains the
proofs of Theorem~\ref{the:dimensionofW1}, and the proof of
Theorem~\ref{the:dimensionofW2} is given in
Section~\ref{sec:lowerbound}.  In the last section, we give
counterexamples to a natural candidate for a dimension formula
for general dimension $d$.

Our method of proof is the following. We give upper bounds on the
Hausdorff dimension by direct covering arguments combined with a
proposition by Wang and Wu \cite{WangWu},
Proposition~\ref{simple}. The lower bounds are obtained by
estimates of the type $\mu_n (B(x,r)) \leq C r^s$, where, as
above, $\mu_n$ is the normalised restriction of the Lebesgue
measure to the set $T^{-n} (B(z_n,e^{-\tau n}))$. According to
Lemma~\ref{lemmaforlb} below, such estimates lead to a lower
bound on the Hausdorff dimension provided that $\mu_n$ converges
weakly to the Lebesgue measure on $\mathbb{T}^d$.

\section{Hausdorff measure and dimension} \label{hmd}

Limsup-sets often possess a large intersection property, see
\cite{F1988, F1994,P2015}. This means that the set belongs to a
particular class $\mathscr{G}^s$ of $G_\delta$ sets that, among
other properties, is closed under countable intersections and
consists of sets of Hausdorff dimension at least $s$.

To get an upper bound on the Hausdorff dimension of a set is
frequently easier than obtaining a lower bound, at least if the
set is a limsup-set. The mass distribution principle
\cite{BeresnevichVelani} is often used to get a lower bound of
the Hausdorff dimension of a set. In this paper, we will use the
following special case of a lemma from Persson and Reeve
\cite{PR2015}. See also Persson \cite{P2015}.  It is a slight
variation of a lemma in \cite{P2015}.

Let $\mu$ denote the $d$-dimensional Lebesgue measure on
$\mathbb{T}^d$. For a set $B\subset\mathbb{T}^d$, we define the
$s$-dimensional Riesz energy of $B$ as
\[
  I_s(B) = \iint_{B\times B}\lvert x - y \rvert^{-s} \,
  \mathrm{d}x \mathrm{d}y,
\]
and the $s$-dimensional Riesz energy of the measure $\mu$ as
\[
  I_s (\mu) = \iint |x-y|^{-s} \, \mathrm{d} \mu (x) \mathrm{d}
  \mu (y).
\]


\begin{lemma}[{\cite[Lemma~2.1]{P2022}}]\label{lemma:persson}
  Let $E_n$ be open sets in $\mathbb{T}^d$ and let $\mu_n$ be
  probability measures with
  $\mu_n( \mathbb{T}^d\setminus E_n) = 0$. If there is a constant
  $C>1$ such that
  \begin{equation}\label{munball}
    C^{-1} \le \liminf_{n\to\infty} \frac{\mu_n(B)}{\mu(B)}\le
    \limsup_{n\to\infty} \frac{\mu_n(B)}{\mu(B)}  \le C
  \end{equation}
  for any ball $B$, and
  \[
    \iint |x-y|^{-s} \mathrm{d} \mu_n(x) \mathrm{d} \mu_n(y)<C
  \]
  for all $n$, then
  $\limsup_{n\to\infty}E_n \in \mathscr{G}^s$, and in
  particular we have
  \[
    \dimh \bigl(\limsup_{n\to\infty}E_n \bigr) \ge s.
  \]
\end{lemma}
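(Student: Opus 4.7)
The plan is to construct a probability measure concentrated on $\limsup_n E_n$, absolutely continuous with bounded density with respect to Lebesgue measure $\mu$, and with finite $s$-energy, and then invoke the characterization of the large-intersection class $\mathscr{G}^s$ (as in \cite{F1994, P2015}) by the existence of such measures on every open subset.

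First, by weak-$*$ compactness of probability measures on the compact torus $\mathbb{T}^d$, I would extract a subsequence $(\mu_{n_k})$ converging weakly to a probability measure $\mu_\infty$. For each $K$, set $U_K := \bigcup_{k \geq K} E_{n_k}$; this is open since each $E_n$ is open, and $\mu_{n_k}$ is supported in $U_K$ whenever $k \geq K$. Applying the portmanteau theorem to the closed set $U_K^c$ gives $\mu_\infty(U_K^c) \leq \liminf_{k} \mu_{n_k}(U_K^c) = 0$, so $\mu_\infty$ is supported on $\bigcap_K U_K = \limsup_k E_{n_k} \subseteq \limsup_n E_n$. The density hypothesis then passes to the limit: since every Euclidean ball $B$ has $\mu(\partial B) = 0$, the portmanteau theorem yields $\mu_\infty(B) = \lim_k \mu_{n_k}(B)$, and \eqref{munball} forces $C^{-1}\mu(B) \leq \mu_\infty(B) \leq C\mu(B)$ for every ball $B$. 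By Lebesgue differentiation, $\mu_\infty \ll \mu$ with Radon--Nikodym density bounded between $C^{-1}$ and $C$. Since the Riesz $s$-energy is lower semicontinuous under weak convergence of probability measures, one obtains $I_s(\mu_\infty) \leq \liminf_k I_s(\mu_{n_k}) \leq C$.

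To upgrade from the resulting Frostman-type bound $\dimh (\limsup_n E_n) \geq s$ to the large-intersection statement $\limsup_n E_n \in \mathscr{G}^s$, one needs such finite-energy measures supported on $(\limsup_n E_n) \cap U$ for every open set $U$. I would obtain these by re-running the construction starting from the normalised restrictions $\mu_n|_B / \mu_n(B)$ on balls $B \subseteq U$; the lower density hypothesis guarantees $\mu_n(B) \gtrsim \mu(B)$ so that normalisation preserves the density bounds, while restriction only decreases the $s$-energy. The chief technical obstacle is precisely this localisation step: the global construction of $\mu_\infty$ is a fairly direct portmanteau-plus-semicontinuity argument, but the class $\mathscr{G}^s$ is intrinsically local, so care is needed in transporting both the two-sided density hypothesis and the uniform energy bound through restriction and normalisation to arbitrary open subsets of $\mathbb{T}^d$.
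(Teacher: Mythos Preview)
The paper does not prove this lemma; it is quoted from \cite[Lemma~2.1]{P2022} and used as a black box, so there is no in-paper argument to compare against. Your proposal, however, has a decisive gap.

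In your second paragraph you apply the portmanteau theorem in the wrong direction. For weak convergence $\mu_{n_k}\to\mu_\infty$ and a \emph{closed} set $F$ one has $\limsup_k \mu_{n_k}(F)\le \mu_\infty(F)$, not the reverse; applied to $F=U_K^c$ this yields only the vacuous bound $\mu_\infty(U_K^c)\ge 0$. There is in fact no way to conclude that $\mu_\infty$ is concentrated on $\limsup_k E_{n_k}$, and in the situations where the lemma is used the conclusion is outright false. Your step~3, which is essentially correct, forces $\mu_\infty$ to be equivalent to Lebesgue measure with density in $[C^{-1},C]$; but in the paper's applications $\limsup_n E_n=W_\tau$ has Lebesgue measure zero --- this is precisely why one computes its Hausdorff dimension. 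Hence $\mu_\infty(\limsup_n E_n)=0$: the weak-limit measure lives entirely off the target set, and the whole scheme collapses. (A smaller slip: the continuity-set criterion you invoke in step~3 requires $\mu_\infty(\partial B)=0$, not $\mu(\partial B)=0$; that one can be repaired via the open/closed ball inequalities, but the step-2 error cannot.)

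The arguments in \cite{PR2015,P2015,P2022} avoid weak limits. One works with the $\mu_n$ directly: the density hypothesis allows one to renormalise $\mu_n$ on any dyadic cube $Q$, the uniform energy bound controls the $t$-energy of this renormalisation for every $t<s$, and a Frostman-type inequality converts this into a lower bound on the Hausdorff content of $E_n\cap Q$ valid for all large $n$. That content estimate is exactly what Falconer's characterisation of $\mathscr{G}^s$ requires. No single measure on $\limsup_n E_n$ is ever produced; the $\mu_n$ are consumed scale by scale.
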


Sometimes the assumption that the $s$-dimensional Riesz energy of
$\mu_n$ is uniformly bounded is not easy to check. We observe
that this assumption can be replaced by a stronger but sometimes
more manageable one as the following lemma shows, which is
a slight variation of Lemma~\ref{lemma:persson}.

\begin{lemma} \label{lemmaforlb}
  Let $E_n$ be open sets in $\mathbb{T}^d$ and let $\mu_n$ be
  probability measures with
  $\mu_n(\mathbb{T}^d \setminus E_n) =
  0$.
  Suppose that there are constants $C$ and $s$ such that
  \begin{equation}\label{munballn}
    C^{-1} \le \liminf_{n\to\infty} \frac{\mu_n(B)}{\mu(B)}\le
    \limsup_{n\to\infty} \frac{\mu_n(B)}{\mu(B)}  \le C
  \end{equation}
  for any ball $B$ and $\mu_n(B) \le Cr^s$ for all $n$ and any
  ball $B$ of radius $r$.  Then
  $\limsup_{n\to\infty}
  E_n\in\mathscr{G}^s.$
\end{lemma}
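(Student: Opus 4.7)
The plan is to deduce Lemma~\ref{lemmaforlb} from Lemma~\ref{lemma:persson} by converting the pointwise ball estimate $\mu_n(B(x,r))\le Cr^s$ into a uniform bound on the $s'$-dimensional Riesz energy of $\mu_n$ for every $s'<s$, and then passing to the limit $s'\nearrow s$ using the structure of the large intersection classes. The two-sided comparison~(\ref{munballn}) is identical in the two lemmas, so nothing further is needed there.

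For a fixed $s'\in(0,s)$, I would apply the layer-cake identity to obtain
\[
  \int |x-y|^{-s'} \, \mathrm{d}\mu_n(y) = s' \int_0^\infty r^{-s'-1} \mu_n(B(x,r)) \, \mathrm{d}r,
\]
split the right-hand side at $r=1$, and use $\mu_n(B(x,r))\le Cr^s$ on $(0,1]$ together with $\mu_n(B(x,r))\le 1$ on $[1,\infty)$. Because $s'<s$, the singular part reduces to the convergent integral $\int_0^1 r^{s-s'-1}\,\mathrm{d}r$, so the sum is dominated by $C_{s'}:= s'C/(s-s') + 1$, uniformly in $x$ and $n$. Integrating in $x$ against $\mu_n$ yields the uniform Riesz energy bound $I_{s'}(\mu_n)\le C_{s'}$.

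With this energy bound in hand, Lemma~\ref{lemma:persson} applied with $s'$ in place of $s$ yields $\limsup_{n\to\infty}E_n\in\mathscr{G}^{s'}$ for every $s'\in(0,s)$. I would conclude by invoking the standard characterisation of the large intersection classes $\mathscr{G}^s=\bigcap_{0<s'<s}\mathscr{G}^{s'}$, which is built into Falconer's definition via Hausdorff contents (see \cite{F1994,P2015}), to promote this to $\limsup_{n\to\infty}E_n\in\mathscr{G}^s$.

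The place where care is needed is precisely this last step rather than the energy estimate, which is essentially a one-line computation: the pointwise bound $\mu_n(B(x,r))\le Cr^s$ is \emph{not} strong enough to control $I_s(\mu_n)$ itself (the integral $\int_0^1 r^{-1}\,\mathrm{d}r$ diverges), so one cannot apply Lemma~\ref{lemma:persson} directly at the target exponent. The gap is closed exactly by the decreasing-intersection property of the classes $\mathscr{G}^s$, and one should double-check that the version of the $\mathscr{G}^s$ classes used elsewhere in the paper is the one satisfying this property.
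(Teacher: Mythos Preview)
Your proposal is correct and follows essentially the same route as the paper: bound the $s'$-energy of $\mu_n$ for each $s'<s$ via the layer-cake formula and the ball estimate, apply Lemma~\ref{lemma:persson} at level $s'$, and then pass to $s$ using the nesting $\mathscr{G}^s=\bigcap_{s'<s}\mathscr{G}^{s'}$. The paper's computation differs only cosmetically (it writes the layer-cake integral in the variable $u=|x-y|^{-t}$ rather than in $r$), and your remark that the energy bound genuinely fails at the endpoint $s$ is exactly the reason the paper also works with $t<s$ and then lets $t\nearrow s$.
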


\begin{proof} Let $t < s$. Using the esimate $\mu_n(B) \le Cr^s$,
  we can write
  \begin{align*}
    \int |x-y|^{-t} \mathrm{d} \mu_n(y)
    & =
      1+\int_1^{\infty}\mu_n\bigl(B(x, u^{-1/t})\bigr) \mathrm{d}
      u \le 1+
      2C\int_1^{\infty}u^{-s/t}  \mathrm{d} u\\
    & = 1+2C\frac{t}{s-t}.
  \end{align*}
  Therefore,
  $\iint |x - y|^{-t} \mathrm{d} \mu_n(x) \mathrm{d} \mu_n(y) \le
  1 + 2Ct/(s-t)$ and by Lemma~\ref{lemma:persson} we can conclude
  that $\limsup_{n\to\infty} E_n$ belongs to the intersection
  class with dimension $t$. Since $t$ can be taken as close to
  $s$ as we like, it follows that
  $\limsup_{n\to\infty} E_n\in\mathscr{G}^s.$
\end{proof}

\section{The upper bound}
\label{upperbound}

We start this section with several elementary observations
concerning $T^{-n} (B(z_n, e^{-\tau n}))$.

In the proofs, the singular values of $A^n$ will be of
importance. The following easy lemma tells us that the singular
values and eigenvalues of $A^n$ are comparable.

\begin{lemma}\label{ellipsoid:semiaxes}
  Let $\sigma_{n,1} \leq \ldots \leq \sigma_{n,d}$ be the
  singular values of $A^n$. For any $\varepsilon > 0$, there
  exists a constant $c > 0$ such that
  \[
    c^{-1} e^{-\varepsilon n} \leq \frac{e^{l_k n}}{\sigma_{n,k}}
    \leq c e^{\varepsilon n}.
  \]
  for all $k$ and $n$.
\end{lemma}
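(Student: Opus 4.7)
The plan is to reduce the claim to Gelfand's spectral radius formula applied to exterior powers of $A$. For convenience, I re-order everything in decreasing instead of increasing order: let $\tau_{n,1}\ge\cdots\ge\tau_{n,d}$ be the singular values of $A^n$, so that $\tau_{n,j}=\sigma_{n,d-j+1}$, and let $\mu_j=\lambda_{d-j+1}$ so that $|\mu_1|\ge\cdots\ge|\mu_d|>0$. It then suffices to show that, for every $\varepsilon>0$, there is $c>0$ with $c^{-1}e^{-\varepsilon n}|\mu_j|^{n}\le\tau_{n,j}\le c\,e^{\varepsilon n}|\mu_j|^{n}$ for all $n\ge 1$ and all $j$.

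The key identity is
\[
  \prod_{k=1}^{j}\tau_{n,k}=\|\Lambda^{j}A^{n}\|=\|(\Lambda^{j}A)^{n}\|,
\]
where $\Lambda^{j}$ denotes the $j$-th exterior power and $\|\cdot\|$ the operator norm; the first equality is the standard fact that the top singular value of $\Lambda^{j}M$ is the product of the top $j$ singular values of $M$, and the second is functoriality of $\Lambda^{j}$. The eigenvalues of $\Lambda^{j}A$ are the $j$-fold products $\mu_{i_1}\cdots\mu_{i_j}$ with $i_1<\cdots<i_j$, so the spectral radius is $\rho(\Lambda^{j}A)=|\mu_{1}\mu_{2}\cdots\mu_{j}|$. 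Gelfand's formula $\|M^{n}\|^{1/n}\to\rho(M)$, together with the trivial lower bound $\rho(M)^{n}\le\|M^{n}\|$, yields for any $\varepsilon>0$ a constant $C_{\varepsilon}$ with $\rho(M)^{n}\le\|M^{n}\|\le C_{\varepsilon}e^{\varepsilon n}\rho(M)^{n}$ for every $n$. Applied to $M=\Lambda^{j}A$ this gives
\[
  \Bigl(\prod_{k=1}^{j}|\mu_{k}|\Bigr)^{n}\le\prod_{k=1}^{j}\tau_{n,k}\le C_{j}\,e^{\varepsilon n}\Bigl(\prod_{k=1}^{j}|\mu_{k}|\Bigr)^{n}.
\]

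Dividing the estimate for $j$ by that for $j-1$, pairing the upper bound in one with the lower bound in the other, isolates $\tau_{n,j}$ and produces $C_{j-1}^{-1}e^{-\varepsilon n}|\mu_{j}|^{n}\le\tau_{n,j}\le C_{j}\,e^{\varepsilon n}|\mu_{j}|^{n}$. Translating back to the increasing ordering and setting $c=\max_{j}\max(C_{j},C_{j-1})$ gives the lemma; invertibility of $A$ enters only through $|\mu_{j}|>0$, which makes the division in the previous step legitimate. The main subtlety is the passage from the asymptotic $\|M^{n}\|^{1/n}\to\rho(M)$ to the quantitative bound displayed above, but this is a standard consequence of Gelfand's formula (absorbing finitely many initial $n$ into the constant); beyond that, the argument is essentially bookkeeping on the exterior algebra.
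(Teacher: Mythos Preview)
Your proof is correct. The paper states this lemma without proof, calling it ``easy,'' so there is no argument to compare against; your route via exterior powers and Gelfand's formula is a standard and clean way to establish the result. One cosmetic remark: the symbol $\mu$ clashes with the paper's use of $\mu$ for Lebesgue measure and $\mu_n$ for the auxiliary probability measures, so if this were to be inserted into the paper you would want to rename the reordered eigenvalues.
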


We shall also need the following lemma that we compile from the
book by Everest and Ward \cite{ew1999}.

\begin{lemma}[{\cite[Lemma~2.2 and 2.3]{ew1999}}]
  \label{pointnumber}
  If $A$ is an invertible $d \times d$ integer matrix with no
  eigenvalue being a root of unity, and $T (x) = A x \modulo$,
  then
  \[
    \# \{ \, x \in \mathbb{T}^d : A^n x \modulo = 0 \, \} = |\det
    A|^n.
  \]
\end{lemma}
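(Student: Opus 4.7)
The plan is to identify the set with a quotient of lattices and compute its order via a classical Smith normal form argument. A coset $x + \mathbb{Z}^d \in \mathbb{T}^d$ satisfies $A^n x \equiv 0 \modulo$ precisely when $A^n x \in \mathbb{Z}^d$, i.e.\ when $x \in A^{-n}\mathbb{Z}^d$, which is well defined because $A$, and hence $A^n$, is invertible. Since $A$ has integer entries, $A^n \mathbb{Z}^d \subset \mathbb{Z}^d$ and consequently $\mathbb{Z}^d \subset A^{-n}\mathbb{Z}^d$, so the set in question is canonically identified with the finite group $A^{-n}\mathbb{Z}^d / \mathbb{Z}^d$.

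Next I would observe that the linear isomorphism $A^n \colon \mathbb{R}^d \to \mathbb{R}^d$ sends $A^{-n}\mathbb{Z}^d$ bijectively onto $\mathbb{Z}^d$ and $\mathbb{Z}^d$ onto $A^n \mathbb{Z}^d$, which induces a group isomorphism
\[
A^{-n}\mathbb{Z}^d / \mathbb{Z}^d \;\cong\; \mathbb{Z}^d / A^n \mathbb{Z}^d.
\]
The problem thus reduces to computing the index $[\mathbb{Z}^d : A^n \mathbb{Z}^d]$. For this, I would apply the standard fact that for any invertible integer matrix $M$, $[\mathbb{Z}^d : M \mathbb{Z}^d] = |\det M|$; this is immediate from the Smith normal form $M = U D V$ with $U, V \in \mathrm{GL}_d(\mathbb{Z})$ and $D = \mathrm{diag}(d_1, \ldots, d_d)$, since then $\mathbb{Z}^d / M \mathbb{Z}^d \cong \bigoplus_i \mathbb{Z}/d_i \mathbb{Z}$, of order $\prod_i |d_i| = |\det M|$. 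Taking $M = A^n$ gives the desired count $|\det A^n| = |\det A|^n$. There is no real obstacle here; I note that the no-root-of-unity hypothesis is not actually needed for this particular counting, and appears only because the result is packaged in Everest and Ward together with a companion statement on fixed points of $T^n$, for which invertibility of $A^n - I$ is required.
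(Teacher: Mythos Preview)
Your argument is correct and is the standard proof of this fact. The paper itself does not prove the lemma; it simply cites Everest and Ward \cite[Lemma~2.2 and 2.3]{ew1999} without further argument. Your observation that the no-root-of-unity hypothesis is superfluous for this preimage count is also correct: only invertibility of $A$ (equivalently $\det A \neq 0$) is needed, and the extra hypothesis is indeed an artefact of the packaging in \cite{ew1999}, where the companion fixed-point count $\#\{x : T^n x = x\} = |\det(A^n - I)|$ does require that $A^n - I$ be invertible.
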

  
Put $L:=\sum_{i=1}^d l_i$, then $e^{nL}=|\det A|^n$, which is an
integer. Let $\tau > 0$. We let $k$ be the smallest number such
that $\tau + l_k > 0$. By Lemma~\ref{pointnumber}, the set
$T^{-n} (B(z_n, e^{-\tau n}))$ consists of $e^{nL}$ ellipsoids
with semi-axes $r_1 \geq r_2 \geq \ldots \geq r_d > 0$, where
$r_i=e^{-(\tau n + \sigma_{n,i})}$, $1\le i\le k$.  It follows
from Lemma~\ref{ellipsoid:semiaxes} that for $n\ge1$ and
$1\le j\le d$,
\[
  c^{-1}e^{-(l_k+\tau+\epsilon)n} < r_j \le
  ce^{-(l_k+\tau-\epsilon)n}.
\]
By the choice of $k$, we have that $r_k, \ldots, r_d$ are
exponentially small (in $n$), whereas $r_j$ for $j \leq k-1$ are
exponentially large.
   
We are now ready to prove Theorem~\ref{upperboundofW}.

\begin{proof}[Proof of Theorem~\ref{upperboundofW}]
  To obtain an upper bound on the Hausdorff dimension of
  $W_\tau$, we look for covers of the set $W_\tau$. Recall that
  \[
    W_\tau = \limsup_{n\in\mathbb{N}} T^{-n} B(z_n,e^{-n\tau}).
  \]
  
  Let $\varepsilon > 0$ and recall $L=\sum l_j$. By
  Lemma~\ref{ellipsoid:semiaxes}, for all large $n$, the set
  $T^{-n}B(z_n,e^{-n\tau})$ consists of $e^{Ln}$ ellipsoids with
  semi-axes not more than $e^{-(l_j+\tau-\varepsilon)n}$. Fix an
  integer $k$ such that $1\leq k\leq d$, we may cover the set
  $T^{-n}B(z_n,e^{-n\tau})$ by balls of radius
  $r=e^{-(l_k+\tau-\varepsilon)n}$. We assume that $k$ is such
  that $l_k+\tau-\varepsilon>0$, and hence also that the radius
  goes to zero as $n$ goes to infinity.

  To cover each ellipsoid we need about
  \[
    \prod_{j<k} \dfrac{e^{-(l_j + \tau - \varepsilon)n}}{r} =
    \prod_{j<k} \dfrac{e^{-(l_j + \tau - \varepsilon)n}}{e^{-(l_k
        + \tau - \varepsilon)n}} = e^{n ( ( k - 1 )
        l_k - \sum_{j<k} l_j )}
  \]
  such balls. Hence, we need in total not more than about
  \[
    e^{n ( L + ( k - 1 ) l_k - \sum_{j<k} l_j )}
  \]
  balls of radius $e^{-(l_k+\tau-\varepsilon)n}$ to cover the set
  $T^{-n}B(z_n,e^{-n\tau})$.
   
  If $l_k+\tau>0$, then the radius $e^{-(l_k+\tau)n}$ goes to $0$
  as $n\rightarrow\infty$ and the cover mentioned above can be
  used to get an upper bound on the Hausdorff dimension of
  $W_\tau$. Hence, by letting $\varepsilon \to 0$, we get that
  \[
    \dimh W_\tau\leq\dfrac{kl_k +
      \sum_{j>k}l_j}{\tau+l_k}
  \]
  provided that $\tau+l_k>0$. This proves the upper bound on the
  Hausdorff dimension of $W_\tau$.
\end{proof}

The proof of Theorem~\ref{the:dimensionofW2} is divided into two
parts. We show the upper bound in this section, and give the
lower bound in Section~\ref{sec:lowerbound}.

\begin{proof}[Proof of the upper bound in
  Theorem~\ref{the:dimensionofW2}]
  
  To obtain the upper bound on the Hausdorff dimension of
  $W_\tau$ in Theorem~\ref{upperboundofW}, we cover the
  ellipsoids individually. Given $1\le k\le d$ with $l_k+\tau>0$,
  let $r=e^{-(l_k+\tau-\epsilon)}$, then the number of balls of
  radius $r$ covering $T^{-n}B(z_n,e^{-\tau n})$ is not more than
  \[
    N= e^{n ( L + ( k - 1 ) l_k - \sum_{j<k} l_j )}.
  \]
  However, as we shall see below, we can sometimes do with much
  fewer balls.

  In the eigenspace of the eigenvalue $\lambda_{j}$, the
  separation between two nearby ellipsoids is not more than
  $ e^{-(l_j-\epsilon)n}$. Let $d_j$ be the dimension of this
  eigenspace. Then, if $r > e^{-(l_j-\epsilon)n}$, every ball of
  radius $r$ used in the cover will intersect at least
  \[
    \Big(\frac{r}{e^{-(l_j-\epsilon)n}}\Big)^{d_j}=e^{-d_j(l_k+\tau-l_j)n}
  \]
  ellipsoids in the directions of the eigenspace. Letting
  $(x)_+ = \max\{0, x\}$, we therefore have that each ball in the
  cover intersects
  \[
    e^{d_j (l_j-l_k-\tau)_+n}
  \]
  ellipsoids in the direction of the eigenspace, no matter if
  $r > e^{-(l_j-\epsilon)n}$ or not.  In this case $d_j=1$,
  $j=1,2,\dots,d$.

  All together, this means that we need not more than about
  \begin{equation*}
    \begin{split}
      N\prod_j e^{-(l_j-l_k-\tau)_+n}&= e^{n(L+(k-1)l_k-\sum_{j<k} l_j-\sum_j (l_j-l_k-\tau)_+)}\\
      &= e^{n(kl_k+\sum_{j>k} l_j-\sum_j (l_j-l_k-\tau)_+)}
    \end{split}
  \end{equation*}
  ellipses to cover $T^{-n}(B(z_n, e^{-\tau n}))$.

  Letting $\epsilon\to 0$, we conclude that
  \[
    \dimh W_\tau \le\frac{kl_k +\sum_{j>k} l_j -\sum_j(l_j - l_k
      - \tau )_+}{\tau + l_k}
  \]
  provided that $\tau +l_k > 0$. Thereby we prove the upper bound
  on $\dimh W_\tau$.
\end{proof}

\section{The proof of Theorem~\ref{the:dimensionofW1}}
\label{sec:lowerboundhyperbolic}

First, let us give some notation.  We write $f_n\lesssim g_n$,
$n\in\mathbb{N}$, if there is an absolute constant $0<c< \infty$
such that $f_n\le cg_n$ for large $n$.  If $f_n\lesssim g_n$ and
$g_n\lesssim f_n$ , then we write $f_n\asymp g_n$.

For the lower bound in our theorems, we use
Lemma~\ref{lemmaforlb}.  In this section, let $\mu$ denote the
$2$-dimensional normalized Lebesgue measure on $\mathbb{T}^2$,
and we define the probability measure $\mu_n$ supported on
$E_n=T^{-n}B(z_n,e^{-n\tau})$ by
\[
  \mu_n=\pi^{-1}e^{n2\tau}\mu\arrowvert_{E_n}.
\]

The following lemma is essential in estimating $\dimh W_{\tau}$
when $A$ is a hyperbolic $2\times 2$ integer matrix.
  
\begin{lemma} \label{lem:algebraic}
  Let $A$ be a $2\times 2$ integer matrix with an eigenvalue
  $0 < |\lambda|<1$. Then both eigenvalues of $M$ are irrational.
\end{lemma}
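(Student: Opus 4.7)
The plan is to exploit the fact that the characteristic polynomial of $A$ is monic with integer coefficients, so the rational root theorem forces any rational root to be an integer dividing $\det A$, and then to use $0<|\lambda|<1$ to preclude this.

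First I would write $p(x)=x^2-\tr(A)x+\det(A)\in\mathbb{Z}[x]$; the eigenvalues are exactly the roots of $p$. Before applying the rational root theorem I would rule out that $\lambda$ could be non-real: if $\lambda\in\mathbb{C}\setminus\mathbb{R}$, then since $p$ has real coefficients the other root of $p$ must be $\bar\lambda$, so $\det A=\lambda\bar\lambda=|\lambda|^2$. Because $0<|\lambda|^2<1$ and $\det A\in\mathbb{Z}$, this is impossible. Hence $\lambda\in\mathbb{R}$.

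Next I would invoke the rational root theorem: any rational root of the monic integer polynomial $p$ is an integer. Since $0<|\lambda|<1$ prevents $\lambda$ from being a nonzero integer, $\lambda$ must be irrational. For the second eigenvalue $\lambda'$, the identity $\lambda+\lambda'=\tr(A)\in\mathbb{Z}$ gives $\lambda'=\tr(A)-\lambda$, the difference of an integer and an irrational real number, hence irrational as well.

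There is no serious obstacle here; the argument is essentially a one-line application of the rational root theorem. The only mild subtlety is the need to first exclude non-real eigenvalues, so that the conclusion ``irrational'' has its usual meaning, which is why I would treat the complex case separately at the start.
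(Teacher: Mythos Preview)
Your proof is correct. Both arguments exploit the same underlying fact---that a rational root of a monic integer polynomial must be an integer---but you invoke it directly via the rational root theorem, whereas the paper reproves this special case by hand: assuming $\lambda=p/q$ reduced, it uses $\lambda+\lambda'=\tr A\in\mathbb{Z}$ and $\lambda\lambda'=\det A$ to force $q\mid p^2$, hence $q=1$. Your route is cleaner and more standard; it also has the advantage that you explicitly dispose of the possibility of complex eigenvalues (via $\det A=|\lambda|^2\in(0,1)\cap\mathbb{Z}$, a contradiction), a point the paper's proof leaves implicit from the surrounding hyperbolicity hypothesis $|\lambda_1|<1<|\lambda_2|$.
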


\begin{proof}
  Suppose $D=\det A$ and $\lambda=p/q$, where $p/q$ is a reduced
  fraction.  Then the other eigenvalue must be $qD/p$ since the
  product of the eigenvalues is $D$. The trace of the matrix is
  also an integer and is the sum of the eigenvalues. Hence
  \[
    p/q + Dq/p = (p^2 + Dq^2)/(pq)
  \]
  is an integer. Therefore, $(p^2 + Dq^2)$ is divisible by
  $q$. But then $q $ divides $p^2$ which is impossible, since
  $p/q$ is a reduced fraction.
\end{proof}

Let $\theta $ be a real irrational number, and let
$\theta - [\theta] =\{\theta\}$ be the fractional part of
$\theta$, where $[\theta]$ is the greatest integer not greater
than $\theta$. The distribution of $(\{\theta n\})_{n\ge1}$ is a
crucial ingredient in showing $(\mu_n)_n$ satisfy inequalities
\eqref{munballn}.

The set $\{\, \{\theta n\} \colon1 \le n\le N \,\}$ partitions
the interval $[0, 1]$ into $N+1$ intervals. The three distance
theorem states that these intervals have at most three distinct
lengths, then we denote $d_\theta (N)$ and $d_\theta'(N)$,
respectively, the maximum and minimum lengths.  The following
theorem describes the relationships between $d_\theta (N)$ and
$d_\theta'(N)$.

\begin{theorem}[Theorem~2 in \cite{MK}]\label{MK}
  Under the setting given above, the sequence
  $(\frac{d_\theta(N)}{d_\theta'(N)})_{N=1}^\infty$ is bounded if
  and only if $\theta $ is of constant type (defined as
  irrationals with bounded partial quotients).
\end{theorem}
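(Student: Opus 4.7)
The plan is to use the continued fraction expansion $\theta=[a_0;a_1,a_2,\ldots]$ with convergents $p_k/q_k$, and the quantities $\eta_k:=|q_k\theta - p_k|$, which satisfy the fundamental recurrence
\[
  \eta_{k-1} = a_{k+1}\eta_k + \eta_{k+1}.
\]
The argument will rely on the quantitative form of the three-distance theorem, which expresses the three possible gap lengths explicitly in terms of $\eta_{k-1}$ and $\eta_k$, and on a particularly clean degenerate case that occurs when $N=q_k$.

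For the implication ``not of constant type $\Rightarrow$ unbounded ratio,'' I would specialise to $N=q_k$. A standard induction, tracking how the insertion of $\{q_k\theta\}$ modifies the partition produced by $\{j\theta\}_{1\le j<q_k}$, shows that at this specific $N$ the partition collapses to exactly two distinct gap lengths: one gap of length $\eta_k$ and the remaining gaps of length $\eta_{k-1}$. Then, using the recurrence,
\[
  \frac{d_\theta(q_k)}{d_\theta'(q_k)} = \frac{\eta_{k-1}}{\eta_k} = a_{k+1} + \frac{\eta_{k+1}}{\eta_k} \ge a_{k+1}.
\]
Hence if $(a_k)$ is unbounded, so is the ratio along the subsequence $N=q_k$.

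For the converse, for a general $N$ with $q_k\le N<q_{k+1}$, write $N=q_k-1+jq_{k-1}+r$ with $0\le j\le a_{k+1}$ and $0\le r<q_{k-1}$. The quantitative three-distance theorem then asserts that the three possible gap lengths at time $N$ are
\[
  \eta_k,\qquad \eta_{k-1}-j\eta_k,\qquad \eta_{k-1}-(j-1)\eta_k,
\]
all of which lie in $[\eta_k,\eta_{k-1}]$. Granting this,
\[
  \frac{d_\theta(N)}{d_\theta'(N)} \le \frac{\eta_{k-1}}{\eta_k} \le a_{k+1}+1 \le \sup_j a_j + 1,
\]
which is finite precisely when $\theta$ is of constant type. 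Combining both directions gives the equivalence.

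The main obstacle is establishing the quantitative three-distance statement and its degenerate form at $N=q_k$. I would prove both by induction on $N$, using the observation that inserting the new point $\{(N+1)\theta\}$ always lands in one of the longest gaps and subdivides it into one gap equal to the previously second-longest length and a new gap equal to the previously shortest length; the recurrence $\eta_{k-1}=a_{k+1}\eta_k+\eta_{k+1}$ then guarantees that the three admissible lengths are exactly as claimed and remain sandwiched between $\eta_k$ and $\eta_{k-1}$.
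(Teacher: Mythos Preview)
The paper does not prove this statement; it is quoted as Theorem~2 of \cite{MK} and invoked without proof in Lemma~\ref{lem:weaklimit}. There is thus no argument in the paper to compare against.

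Your strategy is the standard one and is essentially that of \cite{MK}: both directions follow from the explicit (Slater/van Ravenstein) form of the three-distance theorem expressed through the continued-fraction data $\eta_k=|q_k\theta-p_k|$. However, several of the intermediate facts you state are inaccurate. The paper works on the interval $[0,1]$ with $N$ interior points and $N+1$ gaps, which is equivalent to the circle with the $N+1$ points $\{j\theta\}_{0\le j\le N}$. In that setting your claim that $N=q_k$ yields exactly the two gap lengths $\eta_{k-1}$ and $\eta_k$ is false: for $\theta=\sqrt2-1$ and $N=q_2=5$ one finds the three lengths $\eta_2$, $\eta_1$, and $\eta_1+\eta_2$, so the maximal gap exceeds $\eta_{k-1}$. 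Likewise your decomposition $N=q_k-1+jq_{k-1}+r$ with $0\le r<q_{k-1}$ does not parametrise the range $q_k\le N<q_{k+1}$ (the roles of $q_{k-1}$ and $q_k$ appear swapped), and the assertion that all gaps lie in $[\eta_k,\eta_{k-1}]$ fails for the same reason.

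These are indexing slips rather than a structural flaw: with the correct explicit gap formulas one still obtains $d_\theta(N)/d_\theta'(N)\ge a_{k+1}$ along a suitable subsequence and a uniform bound $d_\theta(N)/d_\theta'(N)\le C(1+\sup_j a_j)$, which gives the equivalence. But the quantitative statements you invoke are wrong as written and must be repaired before the proof is complete.
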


\subsection{Proof of Theorem~\ref{the:dimensionofW1} }

Without loss of generality, we assume that $\tau>0$.  Suppose
that $l_1=\log|\lambda_1|$, and $l_2=\log|\lambda_2|$, then
$l_1<0<l_2$.  The assumptions in Theorem~\ref{the:dimensionofW1}
imply that $A$ is diagonalisable and we do not need to use the
$\varepsilon$ in Lemma~\ref{ellipsoid:semiaxes} as in the proof
of Theorem~\ref{upperboundofW} above, and for simplicity we
assume that $c=1$ in Lemma~\ref{ellipsoid:semiaxes}. By
Lemma~\ref{pointnumber}, the set $T^{-n} (B(z_n,e^{- \tau n}))$
consists of $e^{L n}$ ellipses with semi-axes
$e^{- (\tau + l_1)n}$ and $e^{- (\tau + l_2)n}$, where
$L=l_1+l_2\ge0$.

The strategy of the proof is the following. We are going to use
Lemma~\ref{lemmaforlb}, and therefore, we are going to prove that
$(\mu_n)_{n=1}^\infty $ satisfies the inequalities
\eqref{munballn} and that there are constants $C$ and $s$ such
that $\mu_n (B) \leq C^s$.

\begin{lemma}
  \label{lem:weaklimit}
  If $\tau \neq -l_1$ or $|\det A| > 1$, then the measures
  $\mu_n$ satisfy inequalities \eqref{munballn}.
\end{lemma}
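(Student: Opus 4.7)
The plan is to check \eqref{munballn} by a direct geometric analysis of $E_n$ combined with the quantitative three distance theorem (Theorem~\ref{MK}). Decompose $E_n$ as the disjoint union $\bigsqcup_{y\in T^{-n}(z_n)} E_n^y$, where $E_n^y = y + A^{-n}B(0,e^{-\tau n}) \modulo$ is an ellipse with semi-axes $e^{-(\tau+l_1)n}$ in the eigendirection of $\lambda_1$ and $e^{-(\tau+l_2)n}$ in the eigendirection of $\lambda_2$. There are $|\det A|^n$ such ellipses, and their total Lebesgue measure is $\pi e^{-2\tau n}$, confirming that $\mu_n$ is a probability measure. Lemma~\ref{lem:algebraic} says that $\lambda_1,\lambda_2$ are quadratic irrationals, so the slopes of their eigenvectors have eventually periodic continued fractions and are in particular of constant type; this is precisely the hypothesis under which Theorem~\ref{MK} yields a uniform upper bound on $d_\theta(N)/d_\theta'(N)$.

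The argument splits according to the regime. When $|\det A|>1$, the $|\det A|^n$ ellipse centres form a coset of the finite subgroup $A^{-n}\mathbb{Z}^2/\mathbb{Z}^2\subset\mathbb{T}^2$; for a ball $B=B(p,r)$ I would project this coset successively onto the two eigenvector directions and apply Theorem~\ref{MK} to the resulting one-dimensional sequences, concluding that the number of centres that contribute to $B\cap E_n$ is within a multiplicative constant of $|\det A|^n\mu(B)$. Multiplying by the common area of each ellipse then yields $\mu(B\cap E_n)\asymp\mu(B)\mu(E_n)$, which is \eqref{munballn}. When $|\det A|=1$ the set $E_n$ reduces to a single ellipse, and the hypothesis $\tau\neq -l_1$ is what prevents the long semi-axis $e^{-(\tau+l_1)n}$ from being of order $1$; in the regime where it diverges (the only regime that matters for a positive lower bound on $\dimh W_\tau$), the ellipse wraps around $\mathbb{T}^2$ approximately $e^{-(\tau+l_1)n}$ times along the irrational direction of the eigenvector of $\lambda_1$, and Theorem~\ref{MK} controls the gaps between consecutive passes through $B$, so summing the chord contributions again gives \eqref{munballn}.

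The main obstacle is the uniformity of the estimate over both $n$ and the test ball $B$, including $B$ of small radius. Plain Weyl equidistribution only gives asymptotic averaging and is not strong enough to produce a single constant $C$ that works for every ball; it is the constant-type form of Theorem~\ref{MK} that delivers the bounded maximum-to-minimum gap ratio needed to upgrade qualitative equidistribution to the quantitative two-sided bound in \eqref{munballn}. Assembling the two cases then completes the verification; the excluded critical case $|\det A|=1$, $\tau=-l_1$, where the long semi-axis stays of order $1$ and neither the ``many short ellipses'' nor the ``one long wrapping ellipse'' regime applies, is precisely the borderline where such a uniform estimate can fail.
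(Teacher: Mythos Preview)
Your plan has the right ingredients and, in the short-ellipse regime, is essentially the paper's argument; but two points deserve correction. First, the case split should be governed by the sign of $\tau+l_1$ (which determines whether the ellipses are small or wrap around the torus), not primarily by $|\det A|$. When $|\det A|>1$ and $\tau<-l_1$ there are $e^{Ln}$ ellipses, each of which wraps, so your ``count centres and multiply by the common ellipse area'' step breaks down; the paper handles all of $\tau<-l_1$ (regardless of $|\det A|$) by a separate argument, and treats the borderline $\tau=-l_1$, $|\det A|>1$ separately as well. Second, your dismissal of Weyl equidistribution misreads \eqref{munballn}: the $\liminf$ and $\limsup$ are taken with $B$ \emph{fixed}, so weak convergence $\mu_n\to\mu$ already gives $\lim_n\mu_n(B)/\mu(B)=1$ for every ball, and $C=1$ works uniformly. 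The paper exploits exactly this in the wrapping regime $\tau<-l_1$, invoking only Weyl on sections of the stable foliation and avoiding Theorem~\ref{MK} there entirely.

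Where the paper does use Theorem~\ref{MK} (the regimes $\tau>-l_1$ and $\tau=-l_1$ with $|\det A|>1$), it does so by pushing forward: one counts $\#(A^nB\cap\mathbb{Z}^2)$, i.e.\ integer points in a long thin ellipse whose major axis has quadratic-irrational slope, which immediately produces a sequence $\{\theta k\}$ to which Theorem~\ref{MK} applies. This is cleaner than your proposal to project the coset $A^{-n}\mathbb{Z}^2/\mathbb{Z}^2$ onto the irrational eigendirections, which does not obviously yield a one-dimensional arithmetic progression of the required form. In short: right tools, but restructure the cases by the geometry of $E_n$, drop the objection to Weyl in the wrapping case, and for the counting step replace the vague projection by the paper's push-forward lattice count.
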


\begin{proof}
  If $\tau < -l_1$, then $T^{-n} (B(z_n, e^{-\tau n}))$ consists
  of one or several long ellipses that wrap around the torus, see
  Figure~\ref{fig:complicatedellipse}.  Because of
  Lemma~\ref{lem:algebraic}, the semi-axes have irrational
  directions and therefore $\mu_n$ converges weakly to the
  Lebesgue measure on the torus, as illustrated in
  Figure~\ref{fig:complicatedellipse}. This is so since each
  ellipse is a thin neighbourhood around a piece of a stable
  manifold of length $\delta = e^{-(l_1 + \tau)n}$. If we let
  $\nu_n$ be the normalised Lebesgue measure on such a piece
  $W_\delta^\mathrm{s}$ of stable manifold, then $\nu_n$
  converges weakly to the two-dimensional Lebesgue measure on
  $\mathbb{T}^2$ as $n \to \infty$. (Use, for instance, Weyl's
  equidistribution theorem on any horizontal or vertical section
  of $W_\delta^\mathrm{s}$, i.e.\
  $(\mathbb{T} \times \{y\}) \cap W_\delta^\mathrm{s}$ or
  $(\{x\} \times \mathbb{T}) \cap W_\delta^\mathrm{s}$.)  Since
  $\mu_n$ is an average of such measures, $\mu_n$ converges
  weakly to the two-dimensional Lebesgue measure on
  $\mathbb{T}^2$.

  If $\tau > -l_1$, then there are two cases. If $|\det A| = 1$,
  then the dimension formula in this case gives the Hausdorff
  dimension 0, and there is nothing to prove.  If $|\det A| > 1$,
  then $T^{-n} (B(z_n, e^{-\tau n}))$ consists of $e^{Ln}$
  exponentially small ellipses.  Given $B=B(x,r)$, note that for
  $n$ large enough,
  \begin{equation}\label{mun1}
    \begin{split}
      \mu_n(B)&=\sum_{i\colon B\cap
        R_{n,i}\ne\varnothing}\pi^{-1}e^{2n\tau}\mu(B\cap
      R_{n,i})\\
      &\asymp e^{2n\tau}\mu( R_{n,i})\#\{i\colon B\cap
      R_{n,i}\ne\varnothing\}\\
      &\asymp e^{-n(l_1+l_2)}\#\{B\cap T^{-n}z_n\}\asymp
      e^{-n(l_1+l_2)}\#\{A^nB\cap \mathbb{Z}^2\}\\
    \end{split}
  \end{equation}
  Now we estimate $\#\{A^nB\cap \mathbb{Z}^2\}$. The set $A^nB$
  is an ellipsoid with semi-axes $re^{nl_1}$ and $re^{l_2n}$.
  The direction of the longer semi-axis of $A^nB$ is irrational,
  denoted by $\theta$. Observe that
  \[
    \#\{A^nB\cap \mathbb{Z}^2\}\asymp\{1\le k\le
    re^{l_2n}:\{\theta k\}\in I_n\},
  \]
  where $I_n$ is an interval of length $re^{l_1n}$.  Since
  $\theta $ is a quadratic irrational number, $\theta $ is of
  constant type. Given $N\ge 1$, it follows from Theorem~\ref{MK}
  that $\frac{d_\theta(N)}{d_\theta'(N)}$ is uniformly bounded,
  which implies that
  \[
    d_\theta(N)\asymp d_\theta'(N)\asymp N^{-1}.
  \]
  From what is written above we see that
  \[
    \#\{1\le k\le re^{l_2n}:\{\theta k\}\in I_n\}\asymp
    \frac{|I_n|}{d_\theta(re^{l_2n})}\asymp re^{nl_1}re^{l_2n}.
  \]
  Recalling \eqref{mun1}, we conclude that for any ball $B$,
  \[
    \mu_n(B)\asymp r^2
  \]
  holds for $n$ large enough.

  We now consider the case $\tau=-l_1$ and $|\det A | > 1$. In
  this case, $T^{-n} (B(z_n, e^{-\tau n}))$ consists of $e^{nL}$
  long ellipses transversal in $B$.
  Therefore
  \begin{equation}\label{tauequall1}
    \begin{split}
      \mu_n(B)&\asymp e^{2n\tau} \sum_{i\colon B\cap
        R_{n,i}\ne\varnothing}\mu(B\cap R_{n,i})\\
      &\asymp e^{2n\tau} \#\{T^{-n}z_n\cap R\} re^{-n(\tau+l_2)},
    \end{split}
  \end{equation} 
  where $R$ is an ellipse with the same centre as $B$, the length of
  which is a constant, independent of $B$ and $n$, in stable
  direction, and comparable to $r+e^{-n(\tau+l_2)}$ in unstable
  direction. From the argument above, we obtain
  \[
    \#\{T^{-n}z_n\cap R\}\asymp re^{n(l_2-\tau)}.
  \]
  Combining this with \eqref{tauequall1}, we have
  $\mu_n(B)\asymp r^2$.
\end{proof}

\begin{lemma}
  \label{lem:ballestimate}
  There is a constant $C$ such that
  \[
    \mu_n (B(x,r)) \leq \left\{
      \begin{array}{ll}
        C r^{\frac{2 l_2}{\tau + l_2}}
        & \text{if } 0<\tau < -l_1 \\
        C r^{\min\{\frac{l_1+l_2}{\tau + l_1},\frac{2 l_2}{\tau +
        l_2}\}}
        & \text{if } \tau > -l_1 ,
      \end{array} \right.                                                                                                           
  \]                                                                                                                                  
  for all $n$, $x$ and $r$.
\end{lemma}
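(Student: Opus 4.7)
The plan is to bound $\mu_n(B(x,r))$ by combining two complementary estimates and taking the better one in each regime of $r$. Since $\mu_n = \pi^{-1} e^{2n\tau}\mu|_{E_n}$, one has the trivial density bound $\mu_n(B(x,r)) \leq e^{2n\tau} r^2$. Evaluated at the short semi-axis $a := e^{-(\tau+l_2)n}$ of the component ellipses of $E_n$, this equals $a^{2l_2/(\tau+l_2)}$, and since the exponent $s$ in both clauses of the lemma satisfies $s \leq 2l_2/(\tau+l_2)$, the trivial bound already gives $\mu_n(B) \leq r^s$ for all $r \leq a$. The remaining work is for $r > a$.

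For $r > a$ I would use the geometry of $E_n$: it is the union of $e^{nL}$ ellipses $R_{n,i}$ with semi-axes $a$ (unstable direction) and $b := e^{-(\tau+l_1)n}$ (stable direction), centered at the preimages $T^{-n}(z_n)$. Counting the $R_{n,i}$ that meet $B(x,r)$ reduces by a linear change of coordinates to counting integer lattice points in $A^n B(x,r)$, an ellipse with axes $r e^{l_1 n}$ (short) and $r e^{l_2 n}$ (long). By Lemma~\ref{lem:algebraic} the unstable direction is a quadratic irrational, hence of constant type, and Theorem~\ref{MK} then makes the three-distance theorem yield such a count as ``area plus $O(1)$''. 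Multiplying by the per-ellipse $\mu_n$-mass $e^{-nL}$ and accounting for partial intersections (i.e.\ whether $B$ is smaller or larger than the ellipses in each direction), I expect a structural bound of the form
\[
  \mu_n(B(x,r)) \leq C\bigl(r^2 + re^{(\tau-l_2)n} + e^{-nL}\bigr),
\]
with the last term absent in Case~1, where $b > 1$ and the regime $r > b$ does not occur.

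The final step is to verify, regime by regime, that the right-hand side is bounded by $Cr^s$. The critical observation is that $re^{(\tau-l_2)n} = r^s$ precisely when $r = a$ and $s = 2l_2/(\tau+l_2)$, and that $e^{-nL} = r^s$ precisely when $r = b$ and $s = (l_1+l_2)/(\tau+l_1)$. Away from these critical scales the inequality $\leq r^s$ becomes strict and one summand dominates exponentially. The routine check amounts to splitting Case~2 further into $\tau < l_2$ (where $s = 2l_2/(\tau+l_2)$) and $\tau \geq l_2$ (where $s = (l_1+l_2)/(\tau+l_1)$) and tracking, in each of the regimes $r \leq a$, $a < r \leq b$, $r > b$, which of the three summands is the leading one.

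The main obstacle I anticipate is the uniformity of the $O(1)$ error in the three-distance estimate. For a generic irrational slope the best bound from Weyl's equidistribution theorem has error $O(\log n)$, which would pollute the critical-scale matching at $r = a$ or $r = b$ and ruin the inequality $\mu_n(B) \leq C r^s$. The constant-type property furnished by the quadratic irrationality (Lemma~\ref{lem:algebraic} and Theorem~\ref{MK}) is precisely what reduces this error to a uniform $O(1)$ and makes the argument go through; once that is in hand, the remaining case analysis is tedious but essentially arithmetic.
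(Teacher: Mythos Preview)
Your overall plan is sound and would go through, but it is organised quite differently from the paper's argument, and one of your reductions needs to be stated more carefully.

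\medskip

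\textbf{Comparison with the paper.} The paper does \emph{not} count lattice points in $A^nB$. Instead it applies Liouville's inequality directly to the quadratic-irrational eigendirection to extract an explicit \emph{separation} between the strips (or ellipses) that make up $E_n$: roughly $c\,e^{(\tau-l_2)n}$ when $\tau<(l_2-l_1)/2$, and $c\,e^{-\frac12(l_1+l_2)n}$ when $\tau\geq(l_2-l_1)/2$. This forces a split of the $\tau$-axis into three ranges rather than your two, and in each range $\mu_n(B(x,r))$ is bounded by comparing $r$ against the separation and against the semi-axes $a,b$. Your route via Theorem~\ref{MK} is precisely the device the paper already uses in the proof of Lemma~\ref{lem:weaklimit}; carrying it over to the present lemma unifies the two proofs and replaces the paper's three $\tau$-cases by the two cases of the lemma statement (sub-split at $\tau=l_2$, where the minimum switches). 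Both routes rest on the same arithmetic input---the constant-type property from Lemma~\ref{lem:algebraic}---so neither is more elementary, but yours is more compact while the paper's makes the geometric separation explicit.

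\medskip

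\textbf{A point to tighten.} The reduction ``$\#\{R_{n,i}:R_{n,i}\cap B(x,r)\neq\emptyset\}=\#\bigl(\mathbb{Z}^2\cap A^nB(x,r)\bigr)$'' is only correct up to constants once $r\geq b$. For $a<r<b$ the component ellipse $R_{n,i}$ has long semi-axis $b>r$, so it can meet $B(x,r)$ with its centre up to distance $\approx b$ from $x$; the honest count is of lattice points in $A^n\bigl(B(x,r)\oplus R\bigr)$, an ellipse with semi-axes $\approx (r{+}a)e^{l_2 n}$ and $(r{+}b)e^{l_1 n}\approx e^{-\tau n}$. With this correction the three-distance estimate gives
\[
\#\{R_{n,i}\text{ meeting }B\}\ \lesssim\ r\,e^{(l_2-\tau)n}+O(1)\qquad(a<r<b),
\]
and multiplying by the partial-intersection mass $\lesssim r\,e^{(\tau-l_2)n}$ recovers exactly the $r^2+r\,e^{(\tau-l_2)n}$ that you need in that regime. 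If instead you literally use $A^nB(x,r)$ (area $r^2e^{nL}$) together with the full per-ellipse mass $e^{-nL}$, you obtain $r^2+O(e^{-nL})$ even for $a<r<b$, and the term $e^{-nL}$ can exceed $r^s$ there (take $r$ near $a$ when $\tau<l_2$); using the partial mass with the uncorrected count instead gives the wrong leading term. So the Minkowski-sum correction is not cosmetic---it is what produces the middle summand of your structural bound and makes the regime-by-regime verification close.
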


\begin{proof}
  Pick a point $x \in \mathbb{T}^2$ and $r > 0$. We want to give
  an estimate of the $\mu_n$-measure of the ball $B(x,r)$ for
  $n\ge1$. There are three cases to consider: $\tau < -l_1$,
  $-l_1 < \tau < \frac{l_2 - l_1}{2}$ and
  $\frac{l_2-l_1}{2} < \tau$.

  \begin{figure}
    \begin{center}
      \includegraphics[width=0.19\textwidth]{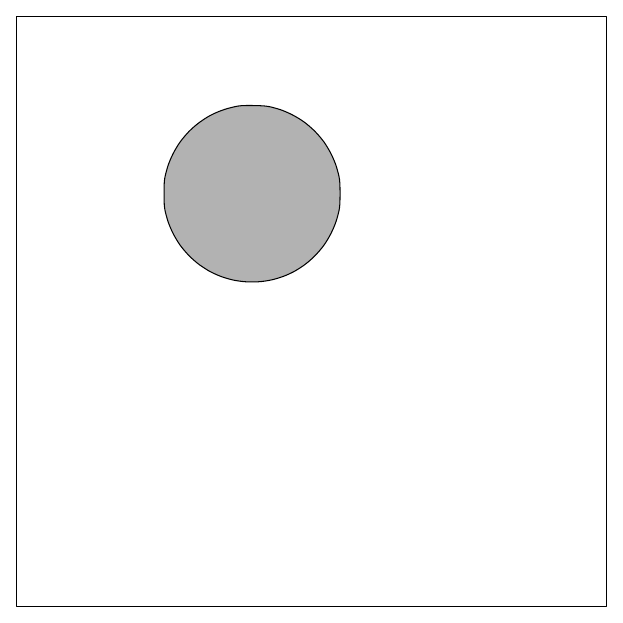}
      \includegraphics[width=0.19\textwidth]{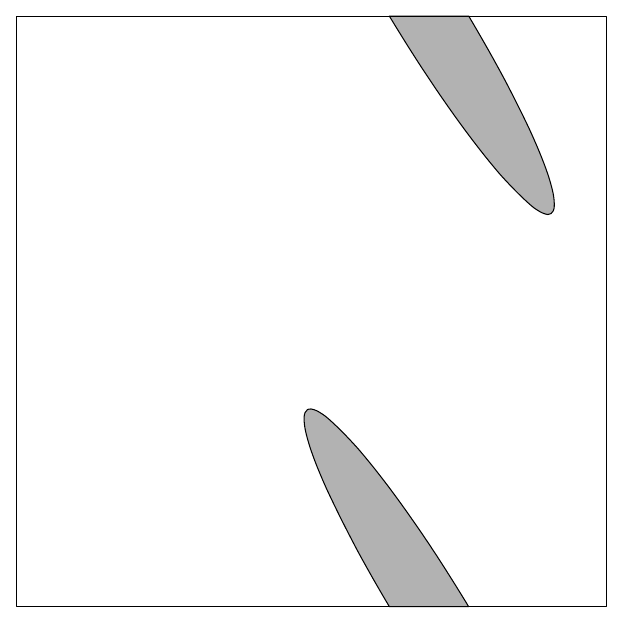}
      \includegraphics[width=0.19\textwidth]{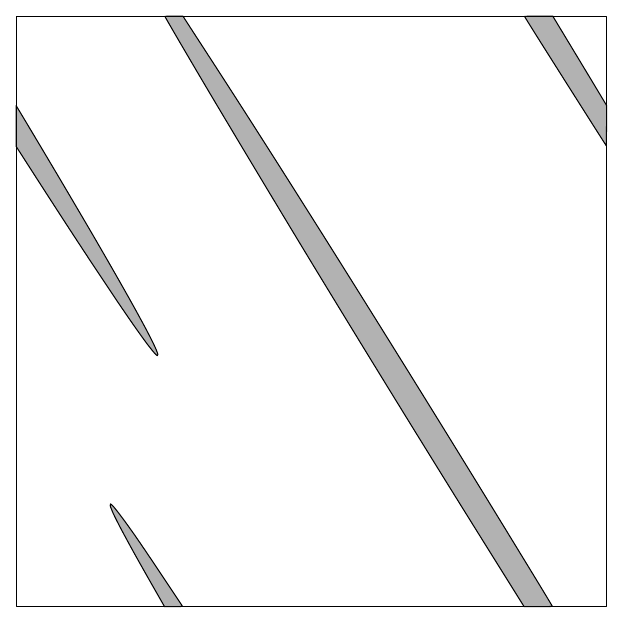}
      \includegraphics[width=0.19\textwidth]{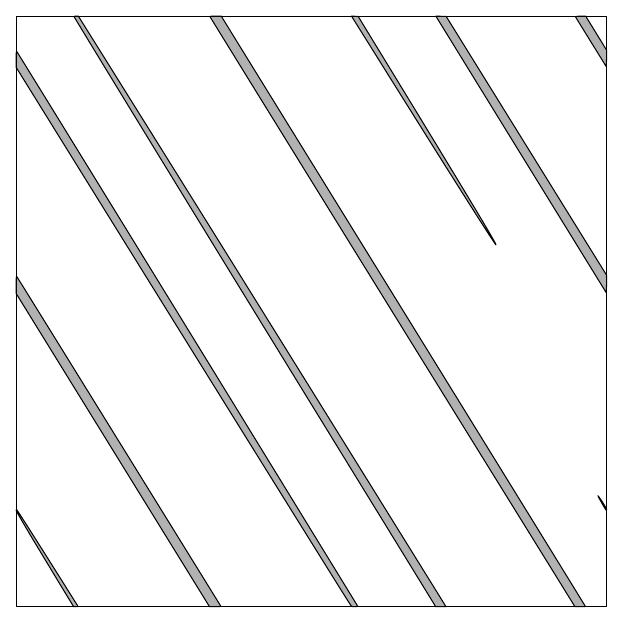}
      \includegraphics[width=0.19\textwidth]{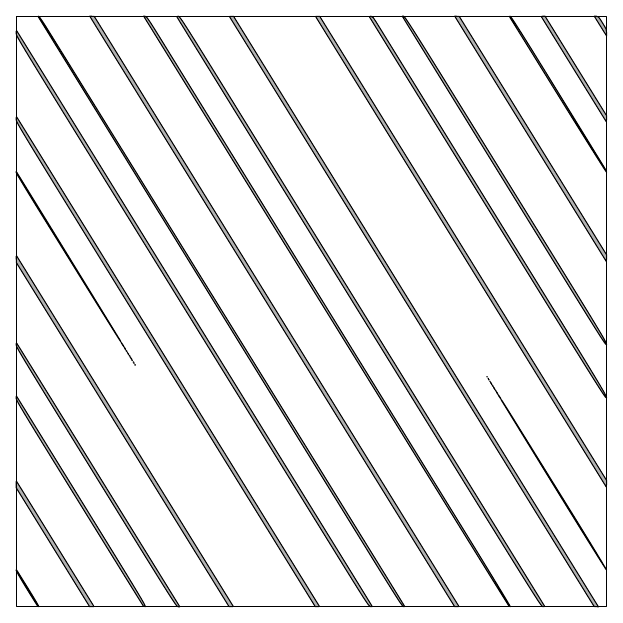}
    \end{center}
    \caption{Artist's illustration of the structure of
      $T^{-n} (B(z_n,e^{- \tau n}))$, for $n = 0, 1, 2, 3, 4$ and
      $A = \bigl( \protect\begin{smallmatrix}2 & 1 \protect\\ 1 &
        1 \protect\end{smallmatrix}
      \bigr)$.} \label{fig:complicatedellipse}
  \end{figure}

  We consider three cases, depending on the size of $\tau$.
  
  \begin{itemize}
  \item Case $\tau < -l_1$.

    In this case, we have $l_1 + \tau < 0 < \tau + l_2$.  Then on
    the torus, each ellipse in $T^{-n} (B(z_n,e^{- \tau n}))$ is
    very long, and hence wraps around the torus in a complicated
    way, see Figure~\ref{fig:complicatedellipse}.  To get a
    good estimate on $\mu_n(B(x,r))$, we need to investigate
    how the strips are distributed on the torus. If they are too
    concentrated, then $\mu_n(B(x,r))$ can be very large.

    First we estimate the distance between different parts of one
    ellipse. Consider a line in the unstable direction going out
    from the point $z_n = (z_{n,1}, z_{n,2})$. The equation of
    this line can be written as
    $y - z_{n,2} = \alpha (x - z_{n,1})$, and by
    Lemma~\ref{lem:algebraic}, the number $\alpha$ is an
    algebraic number of degree $2$. This line will wrap around
    the torus and come close to the point $z_n$.

    Suppose that $p$ and $q$ are integers such that for
    $x = q - z_{n,1}$ the point $(x,y)$ where
    $y = z_{n,2} + \alpha (x - z_{n,1})$ is very close to $z_n$,
    say $y = z_{n,2} + p \pm r$ with $0 < r < c e^{-\tau n}$. We
    then have $|\alpha q - p| < r$, or equivalently
    $|\alpha - \frac{p}{q}| < \frac{r}{q}$. Liouville's theorem
    on Diophantine approximation implies that
    $|\alpha - \frac{p}{q}| > \frac{c}{lq^2}$ and hence
    $q \geq ce^{\tau n}$.

    This implies that the rectangle around $B(z_n,e^{-\tau n})$
    with side-length $2 e^{-\tau n}$ in the stable direction and
    $c e^{\tau n}$ in the unstable direction, does not overlap
    itself, as illustrated in Figure~\ref{fig:rectangle}. Hence,
    any ellipse in $T^{-n}B(z_n, e^{-\tau n})$ does not overlap
    itself and the ellipse has a side-length
    $c e^{(\tau n - l_2)n}$ in the unstable direction, which
    means that the separation of the strips in such ellipse is at
    least $c e^{(\tau n - l_2)n}$, for some $c > 0$.

    \begin{figure}
      \begin{center}
        \includegraphics[width=0.3\textwidth]{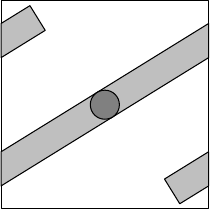}
      \end{center}
      \caption{Illustration of the rectangle which does not
        overlap itself.}
      \label{fig:rectangle}
    \end{figure}
    
    Consider again the rectangle $R$ around $B(z_n,e^{-\tau n})$
    with side-lengths $2 e^{-\tau n}$ in the stable direction and
    $c e^{\tau n}$ in the unstable direction. As we have seen
    above, the rectangle $R$ does not overlap itself. The
    preimage $T^{-n} (R)$ consists of $e^{Ln}$ pieces, but they
    must all be disjoint, since if two of them intersect, then
    their images under $T^n$, which both are $R$, must
    intersect. Since $R$ does not self-intersect, all pieces in
    $T^{-n} (R)$ are disjoint.

    This implies that we have the separation
    $c e^{(\tau - l_2)n}$ between the strips in
    $T^{-n} (B(z_n,e^{-\tau n}))$.
    
    Now we estimate $\mu_n(B)$. There are two cases, depending on
    how large $r$ is compared to the size of the ellipses that
    make up the set
    $T^{-n} (B(z_n,e^{-\tau n}))$.

    \begin{enumerate}
    \item[(i)] Assume that $r \leq e^{-(\tau - l_2)n}$. Then
      $B(x,r)$ intersects at most one strip. We therefore have
      that
      \[
        \mu_n (B(x,r)) \leq c r^{\frac{2l_2}{\tau + l_2}}.
      \]

    \item[(ii)] Suppose that $e^{-(\tau - l_2)} < r < 1$. Then
      $B(x,r)$ intersects at most $c r e^{(l_2 - \tau)n}$
      strips. Each strip gives a contribution to the measure
      which is $e^{2 \tau n} r e^{-(\tau+l_2)n}$. Hence
      \[
        \mu_n (B(x,r)) \leq c r^2 e^{(l_2 - \tau)n} e^{2 \tau n}
        r e^{-(\tau+l_2)n} = c r^2.
      \]
    \end{enumerate}

    Taken together, the two estimates imply that there is a
    constant $C$ such that
    $\mu_n (B(x,r)) \leq C r^{\frac{2 l_2}{\tau + l_2}}$ holds
    for all $n$, $x$ and $r$.

  \item Case $\tau \geq \frac{l_2-l_1}{2}$.

    In this case, we have
    $\tau + l_2 > \tau + l_1 \geq \frac{1}{2} (l_1 + l_2) \geq
    0$. Since we do not consider the case $\tau = -l_1$ in this
    lemma, we actually have $\tau + l_2 > \tau + l_1 > 0$ in this
    case. Note that $B(z_n,r_n)$ is contained in a ``rectangle''
    $R$ centred at $z_n$ and with side-lengths
    $ce^{\frac{1}{2}(l_2-l_1)n}$ in unstable direction and
    $e^{\frac{1}{2}(l_1-l_2)n}$ in stable direction. From what
    was written in the previous case, we conclude that such a
    rectangle doesn't intersect itself. The rectangles in the
    preimage $T^{-n}R$ has a side-length
    $ce^{-\frac{1}{2}(l_1+l_2)n}$ in unstable direction and
    $e^{-\frac{1}{2}(l_1+l_2)n}$ in stable direction and each
    such rectangle contain one of the ellipses in
    $T^{-n} (B(z_n,e^{-\tau n}))$.  Therefore the ellipses are
    separated by at least about $ce^{-\frac{1}{2}(l_1+l_2)n}$.

    We now bound the measure $\mu_n(B(x,r))$.

    \begin{enumerate}
    \item[(i)] If $r < e^{-(\tau + l_2)n}$, then as before
      \[
        \mu_n (B(x,r)) \leq r^{\frac{2l_2}{\tau + l_2}}.
      \]

    \item[(ii)] If $e^{-(\tau + l_2)n} < r < e^{-(\tau + l_1)n}$,
      the ball $B(x,r)$ intersects at most one ellipse, but can
      not contain an ellipse. We then have
      \[
        \mu_n(B(x,r)) \leq e^{2\tau n} r e^{-(\tau+l_2)n} = r
        e^{(\tau - l_2)n}.
      \]
      Since $\tau>\frac{l_2-l_1}{2}$, we have
      $\tau + l_1 > \frac{l_1+l_2}{2} \geq 0$. Therefore,
      $l_1+l_2 \geq 0$ implies that $\tau - l_2 \leq \tau + l_1$
      and hence that $\frac{\tau - l_2}{\tau + l_1} \leq
      0$. Because of this, we can use $r < e^{-(\tau + l_1)n}$ to
      estimate that
      \begin{align*}
        \mu_n (B(x,r))
        & \leq r e^{(\tau - l_2)n} = r \bigl( e^{-(\tau +
          l_1)n} \bigr)^{- \frac{\tau - l_2}{\tau + l_1}} \\
        & \leq r^{1 -
          \frac{\tau-l_2}{\tau + l_1}} = r^{\frac{l_1+l_2}{\tau +
          l_1}}.
      \end{align*}

    \item[(iii)] If
      $e^{-(\tau + l_1)n} < r < e^{-\frac{1}{2}(l_1+l_2)n}$, in
      this case $B(x,r)$ intersect at most one ellipse, and can
      contain an ellipse. Hence
      \[
        \mu_n(B)\le e^{-n(l_1+l_2)}<r^{\frac{l_1+l_2}{\tau+l_1}}.
      \]

    \item[(iv)] When $e^{-\frac{1}{2}(l_1+l_2)n}< r $, we have
      that $B(x,r)$ intersects at most
      $c re^{\frac{1}{2}(l_1+l_2)n}$ ellipses. Then
      \[
        \mu_n(B) \le re^{\frac{1}{2}(l_1+l_2)n} e^{2\tau n}
        e^{-(\tau+l_1)n} e^{-(\tau+l_2)n} = c r
        e^{-\frac{1}{2}(l_1+l_2)n} = c r^2.
      \]
    \end{enumerate}

    From above, we have
    $\frac{\log\mu_n(B(x,r))}{\log r} \geq
    \min\{\frac{l_1+l_2}{\tau + l_1},\frac{2l_2}{\tau +
      l_2},2\}=\frac{l_1+l_2}{\tau + l_1}$.  Hence,
    $\mu_n (B(x,r)) \leq C r^\frac{l_1+l_2}{\tau + l_1}$.

  \item Case $-l_1 \le \tau < \frac{l_2 - l_1}{2}$. 

    Here we can use the separation $e^{(\tau - l_2)n}$ between
    the ellipses in the unstable direction. We consider three
    cases.

    \begin{enumerate}
    \item[(i)] When $r \leq e^{-(\tau + l_2)n}$, the ball
      $B(x,r)$ intersects at most one ellipse, and we have
      \[
        \mu(B(x,r)) \leq r^2 e^{2\tau n} \leq r^2 r^{-
          \frac{2\tau}{\tau + l_2}} = r^{\frac{2l_2}{\tau + l_2}}.
      \]
      
    \item[(ii)] When
      $e^{-(\tau + l_2)n} < r < e^{(\tau - l_2)n}$, here it is
      important to note that
      $\tau - l_2 < - \frac{l_1 + l_2}{2} < 0$. Therefore
      \[
        e^{(\tau - l_2)n} = \bigl( e^{-(\tau + l_2)n} \bigr)^{-
          \frac{\tau - l_2}{\tau + l_2}} \leq r^{- \frac{\tau -
            l_2}{\tau + l_2}},
      \]
      since $- \frac{\tau - l_2}{\tau + l_2} > 0$.
  
      The ball $B(x,r)$ intersects at most one ellipse in the
      unstable direction. Hence
      \begin{align*}
        \mu (B(x,r))
        & \leq e^{2 \tau n} r e^{-(\tau + l_2)n} = r
          e^{(\tau - l_2)n} \\
        & \leq r^{1 - \frac{\tau - l_2}{\tau +
          l_2}} = r^{\frac{2l_2}{\tau + l_2}}.
      \end{align*}

    \item[(iii)] When
      $e^{-(\tau + l_1)n} < r < e^{(\tau - l_2)n}$, here with the
      same reason as (ii), the ball $B(x,r)$ also intersects at
      most one ellipse. Hence
      \begin{align*}
        \mu (B(x,r))
        & \leq e^{2 \tau n} e^{-(\tau + l_1)n}  e^{-(\tau + l_2)n} = 
          e^{-(l_1+ l_2)n} \\
        & \leq  r^{\frac{l_1+l_2}{\tau + l_1}}.
      \end{align*}
      
    \item[(iv)] When $r > e^{(\tau - l_2)n}$, then the ball
      $B(x,r)$ intersects at most about $r / e^{(\tau - l_2)n}$
      ellipses in the unstable direction, and we get
      \[
        \mu (B(x,r)) \leq e^{2 \tau n} r e^{-(\tau + l_2)n}
        \frac{r}{e^{(\tau - l_2)n}} = r^2.
      \]
    \end{enumerate}
    
    Combining (i)--(iii) above, we have
    \[
      \mu_n(B)\le C r^{\min\{\frac{2l_2}{\tau+l_2}, \frac{l_1+l_2}{\tau+l_1}\}}.
      \qedhere
    \]
  \end{itemize}
\end{proof}

By Lemma~\ref{lem:weaklimit} and \ref{lem:ballestimate}, the
assumptions of Lemma~\ref{lemmaforlb} are satisfied, and
therefore the set
$W_{\tau}$ has a large intersection property in sense that
$W_{\tau}\in\mathscr{G}^{s_{\tau}}$, where
\[
  s_{\tau}=\left\{
    \begin{array}{ll} \dfrac{2 l_2}{\tau + l_2} & 0 <
                                                  \tau < -l_1 \\
      \min\Bigl\{ \dfrac{l_1+ l_2}{\tau + l_1} ,\dfrac{2
      l_2}{\tau + l_2} \Bigr\} & \tau > -l_1 \rule{0pt}{22pt}
    \end{array} \right.    
\]
This finishes the proof of Theorem~\ref{the:dimensionofW1}.



\section{Proof of Theorem~\ref{the:dimensionofW2}}
\label{sec:lowerbound}

The proof of Theorem~\ref{the:dimensionofW2} follows some ideas from
Wang and Wu \cite{WangWu}. The following proposition is a
simplification of the corresponding statement in their paper
\cite[Proposition~3.1]{WangWu}.

\begin{proposition}\label{simple}
  Let $\tilde{s}_\tau$ be as in
  Theorem~\ref{the:dimensionofW2}. For $\tau\ge0$, we have
  \begin{align*}
    \tilde{s}_\tau
    &=\min_{t \in \{\, l_i+\tau : 1\le i\le d\,\}} \Biggl\{
       \sum_{j \in \mathcal{K}_1(t) \cup \mathcal{K}_2(t) } 1 +
       \frac{1}{t} \Biggl( \sum_{j\in \mathcal{K}_3(t)} l_j -
       \sum_{j \in \mathcal{K}_2(t)} \tau \Biggr) \Biggr\}\\
     &=\min_{t \in \mathcal{A}} \Biggl \{\sum_{j \in
       \mathcal{K}_1(t) \cup \mathcal{K}_2 (t) } 1 + \frac{1}{t}
       \Biggl( \sum_{j\in \mathcal{K}_3(t)} l_j - \sum_{j \in
       \mathcal{K}_2(t)} \tau \Biggr) \Biggr\}
  \end{align*}
  where
  \[
    \mathcal{A} = \{\, l_i,\,l_i+\tau : 1\le i\le d \,\}
  \]
  and for each $t\in \mathcal{A}$, the sets $\mathcal{K}_1(t)$,
  $\mathcal{K}_2(t)$, $\mathcal{K}_3(t)$ give a partition of
  $\{1,2,\ldots,d\}$ defined by
  \begin{align*}
    \mathcal{K}_1(t) &= \{\, j : l_j\ge t \,\},\\
    \mathcal{K}_2(t) &= \{\, j : l_j+\tau\le t \,\}\setminus
                       \mathcal{K}_1(t), \\
    \mathcal{K}_3(t) &= \{1,2,\ldots,d\}\setminus
                       (\mathcal{K}_1(t)\cup\mathcal{K}_2(t) ).
  \end{align*}
\end{proposition}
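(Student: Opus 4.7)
The plan is to establish the two equalities separately. Denote by $f(t) = \sum_{j \in \mathcal{K}_1(t) \cup \mathcal{K}_2(t)} 1 + t^{-1}\bigl(\sum_{j \in \mathcal{K}_3(t)} l_j - \sum_{j \in \mathcal{K}_2(t)} \tau\bigr)$ the expression in braces, so the claim is $\tilde{s}_\tau = \min_{t \in \{l_i + \tau\}} f(t) = \min_{t \in \mathcal{A}} f(t)$.

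For the first equality, I would substitute $t = l_k + \tau$ and unpack the partition. Using the ordering $l_1 \le l_2 \le \cdots \le l_d$ together with $\tau > 0$, one finds $\mathcal{K}_2(l_k + \tau) = \{j : l_j \le l_k\}$ (so $|\mathcal{K}_2(t)| = k$), $\mathcal{K}_1(l_k + \tau) = \{j : l_j \ge l_k + \tau\}$, and $\mathcal{K}_3(l_k + \tau) = \{j > k\} \setminus \mathcal{K}_1(t)$. Since $(l_j - l_k - \tau)_+ = (l_j - t)_+$ is supported on $\mathcal{K}_1(t)$, the numerator of the $k$-th term in Theorem~\ref{the:dimensionofW2} rewrites as $k l_k + \sum_{j>k} l_j - \sum_{\mathcal{K}_1(t)} l_j + |\mathcal{K}_1(t)| t$; cancelling $\sum_{\mathcal{K}_1(t)} l_j$ using $\sum_{j>k} l_j = \sum_{\mathcal{K}_1(t)} l_j + \sum_{\mathcal{K}_3(t)} l_j$ leaves $k l_k + \sum_{\mathcal{K}_3(t)} l_j + |\mathcal{K}_1(t)| t$. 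Dividing by $t$ and using $t - \tau = l_k$ with $|\mathcal{K}_2(t)| = k$ yields exactly $f(l_k + \tau)$, giving the first equality.

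For the second equality, since $\{l_i + \tau\} \subset \mathcal{A}$, only $\min_{\mathcal{A}} f \ge \min_{\{l_i + \tau\}} f$ is at stake. The key step is to show that $f$ extends to a continuous function on $(0, \infty)$. As $t$ crosses $l_i$ upward, index $i$ moves from $\mathcal{K}_1$ to $\mathcal{K}_3$ and a direct computation shows that the one-sided jump of $f$ is $-1 + l_i/t$, which vanishes at $t = l_i$. Similarly, as $t$ crosses $l_i + \tau$, index $i$ moves from $\mathcal{K}_3$ to $\mathcal{K}_2$ and the jump is $1 - (l_i + \tau)/t$, which vanishes at $t = l_i + \tau$. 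Hence $f$ is continuous. On each open interval between consecutive critical points the sets $\mathcal{K}_j(t)$ are locally constant, so $f(t) = a + b(t)/t$ with $b(t) = \sum_{\mathcal{K}_3(t)} l_j - |\mathcal{K}_2(t)| \tau$, and $f$ is monotonic on each piece; thus the global minimum is attained at some critical point. Tracking $b$, crossing $l_i$ upward increases $b$ by $l_i > 0$ while crossing $l_i + \tau$ decreases $b$ by $l_i + \tau > 0$, so $b$ can transition from non-negative to negative (equivalently, $f$ can switch from non-increasing to increasing) only at critical points of the form $l_i + \tau$. Consequently the global minimum of $f$ on $\mathcal{A}$ equals $\min_{t \in \{l_i + \tau\}} f(t)$.

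The main obstacle I anticipate is the bookkeeping in the second step, especially handling degenerate configurations where several critical points coincide or where $b \equiv 0$ on an entire piece (so $f$ is constant there); in those cases the continuity of $f$ must be used to push any tie in the minimum onto a point of the form $l_i + \tau$. The edge case $\tau = 0$ is automatic since then $\mathcal{A} = \{l_i\} = \{l_i + \tau\}$ and both minima coincide trivially.
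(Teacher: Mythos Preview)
Your argument for the first equality is essentially the same algebraic unpacking as the paper's. One small point: when you write ``$|\mathcal{K}_2(l_k+\tau)|=k$'' you are tacitly assuming $l_k<l_{k+1}$; if there are ties $l_k=\cdots=l_{k+m}$ then $|\mathcal{K}_2(l_k+\tau)|=k+m$. The paper deals with this explicitly by taking the maximal block of equal $l_i$'s and observing that $(k+m)l_k+\sum_{j>k+m}l_j=kl_k+\sum_{j>k}l_j$, so the $k$-th and $(k+m)$-th terms of $\tilde{s}_\tau$ coincide and the computation goes through. This is the bookkeeping you anticipate, and it is easily patched.

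For the second equality the two proofs diverge. The paper simply invokes Proposition~3.1 of Wang--Wu \cite{WangWu}, which already states that the minimum over $\mathcal{A}$ coincides with the minimum over $\{l_i+\tau\}$. Your route is genuinely different and self-contained: you extend $f$ to $(0,\infty)$, check continuity at each transition (the jumps $-1+l_i/t$ and $1-(l_i+\tau)/t$ indeed vanish at the respective critical points), observe that on each open piece $f(t)=a+b/t$ is monotone, and then argue that a local minimum cannot occur at a pure $l_i$-type critical point because $b$ strictly increases there. This last step is the heart of your argument and it is correct: if $t^*=l_i$ is not of the form $l_j+\tau$, then $b^+>b^-$, so $f$ cannot pass from non-increasing to increasing at $t^*$; hence a global minimiser in $\mathcal{A}$ is always of the form $l_j+\tau$ (or coincides with one). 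The endpoint cases are harmless since $f(l_1)=d$ is the maximum while $f(l_d+\tau)=dl_d/(l_d+\tau)<d$ already lies in $\{l_i+\tau\}$. Your approach buys independence from \cite{WangWu} at the cost of the tie-handling you flag; the paper's citation is shorter but less transparent.
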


\begin{proof}
  The second equality holds due to Proposition~3.1 in
  \cite{WangWu}.

  Since $l_1\le l_2\le \ldots \le l_d$, for any $1\le i\le d$,
  assume that there is a $0\le k\le d-i$ such that
  $l_i = \ldots = l_{i+k} < l_{i+k+1}$. Here we adopt
  $l_{d+1}=\infty$. Then
  \begin{align*}
    \mathcal{K}_1(l_i+\tau) &= \{\, j : l_j\ge
                              l_i+\tau \, \} = \mathcal{W}(i),\\ 
    \mathcal{K}_2(l_i+\tau) &= \{1,2,\ldots,i,\ldots , i+k\}.
  \end{align*}
  Note that 
  \begin{align*}
    &\quad ~\frac{1}{\tau+l_i}\Bigl( \sum_{j\in \mathcal{K}_1
      (l_i+\tau) \cup\mathcal{K}_2 (l_i + \tau) } \tau + l_i +
      \sum_{j\in \mathcal{K}_3 (l_i + \tau)} l_j - \sum_{j \in
      \mathcal{K}_2 (l_i + \tau)} \tau \Bigr)\\
    &= \frac{1}{\tau+l_i} \Bigl( \sum_{j \in \mathcal{W}(i)}
      (l_i + \tau - l_j) + \sum_{j \in \mathcal{K}_1 (l_i + \tau)
      \cup \mathcal{K}_3 (l_i + \tau)} l_j + \sum_{j\in
      \mathcal{K}_2 (l_i + \tau)} l_i \Bigr)\\
    &= \frac{1}{\tau + l_i} \Bigl( \sum_{j\in \mathcal{W}(i)}
      (l_i + \tau - l_j) + (i+k) l_i + \sum_{j=i+k+1}^d l_j
      \Bigr) \\ 
    &= \frac{1}{\tau + l_i} \Bigl (\sum_{j\in \mathcal{W}(i)}
      (l_i  + \tau - l_j) + il_i + \sum_{j=i+1}^d l_j \Bigr).
  \end{align*}
  The last equality follows from the assumption on $k$. This
  finishes the proof.
\end{proof}

\begin{proof}[Proof of Theorem~\ref{the:dimensionofW2}]
  Without loss of generality, we assume that $\tau>0$.  Recall
  that $L=\sum l_k$, and put $W_n=T^{-n}B(z_n,e^{-\tau n})$, and
  $W_n$ is the union of $e^{Ln}$ ellipsoids, denoted by
  $\{R_n^k\}_{k=1}^{e^{Ln}}$. The ellipsoids in the set $W_n$
  have semi-axes $e^{-(\tau+l_k)n}$, $1\le k\le d$ which are all
  small. The separation of the ellipsoids is $e^{-nl_k}$,
  $1\le k\le d$ in the direction of the $d$ semi-axes.  Let
  \[
    \mu_n=\frac{\mu|_{W_n}}{\mu(W_n)}=c_de^{nd\tau}\mu|_{W_n},
  \]
  where $\mu$ is the Lebesgue measure. Let $B:=B(x,r)$, where
  $x\in \mathbb{T}^d$ and $r>0$, then one has
  \[
    \mu_n(B)=c_de^{n\tau}\mu(B\cap
    W_n)=c_de^{nd\tau}\sum_{\substack{ 1\le
      k\le e^{Ln}\\ B\cap R_n^k\ne\varnothing}}\mu(B\cap
    R_n^k).
  \]
  Now we show that $\mu_n(B(x,r))\le r^s$. We consider three
  cases, depending on the size of $r$.

  \begin{enumerate}
  \item[(i)] $r<e^{-n(\tau+l_d)}$.\\
    In this case, a ball of radius $r$ intersects only one
    ellipsoid of $W_n$. So
    \begin{align*}
      \mu_n(B) &\le c_de^{nd\tau} \sum_{\substack{1\le k\le e^{Ln} \\ B
      \cap R_n^k \ne \varnothing}} \mu(B)\\
               & \lesssim e^{nd\tau} r^d <
                 r^{d-\frac{d\tau}{\tau+l_d}} =
                 r^{\frac{dl_d}{\tau+l_d}}.
    \end{align*}

  \item[(ii)] $r\ge e^{-nl_1}$.\\
    The ball $B$ intersects at most
    \[
      \prod_{k=1}^dre^{l_kn}=r^de^{Ln}
    \]
    ellipsoid of  $W_n$. Thus
    \begin{equation*}
      \mu_n(B) \le c_d e^{nd\tau} \sum_{\substack{1\le k\le e^{Ln} \\
          B\cap R_n^k \ne \varnothing}} \mu(R_n^k) \lesssim
      e^{nd\tau} r^d e^{Ln} \frac{e^{-nd\tau}}{e^{Ln}} = r^d.
    \end{equation*}

  \item[(iii)] $e^{-n(\tau+l_d)}\le r< e^{-nl_1}$.\\
    In this case, the ball $B$ intersects many ellipsoids, and
    each ellipsoid is in the worst case transversal in $B$.  The
    ellipsoid in $W_n$ are parallel and each ellipsoid has $d$
    semi-axes. In each of these $d$ directions, we will estimate
    the number of ellipsoid segments which a ball of radius $r$
    intersects, aiming to get the total number of
    ellipsoid segments intersecting $B$.

    Recall that 
    \[
      \mathcal{A} = \{\, l_i,l_i+\tau : 1\le i\le d \, \}.
    \]
    Arrange the elements in $\mathcal{A}$ in non-descending
    order, and assume that there exists $i$ such that
    \[
      e^{-nt_{i+1}}\le r<e^{-nt_i},
    \]
    where $t_i,~t_{i+1}$ are two consecutive and distinct terms
    in $\mathcal{A}$.  For such $i$, put
    \begin{align*}
      N_1(i) &:= \{\, j : l_j>t_i \,\} = \{ \, j :
              e^{-nl_j}<e^{-nt_i} \,\}, \\ 
      N_2(i) &:= \{\, j : l_j+\tau \le t_i \,\} = \{ \, j :
              e^{-n(l_j+\tau)} \ge e^{-nt_i} \,\},
    \end{align*}
    and
    \[
      N_3(i):=\{1,2,\ldots,d\}\setminus (N_1(i)\cup N_2(i)).
    \]
    Since $t_i,~t_{i+1}$ are consecutive, we have 
    \begin{equation}\label{ni1}
      N_1(i) = \{\, j : l_j\ge t_{i+1} \,\},
    \end{equation}
    and 
    \begin{equation}\label{ni2}
      N_2(i) = \{\, j : t_{i+1}>l_j+\tau \,\}.
    \end{equation}

    \begin{enumerate}
    \item[(a)] For $j\in N_1(i)$, by equation \eqref{ni1}, one
      has $e^{-nl_j}\le r$. Hence in each of these directions,
      the number of ellipsoids intersecting a ball with radius
      $r$ is at most
      \[
        \frac{r}{e^{-nl_j}}.
      \]
 
    \item[(b)] For $j\in N_2(i)$, we have
      $r\le e^{-n(l_j+\tau)}$. In the direction of this
      semi-axis, a ball with radius $r$ intersects at most one
      ellipse of $W_n$.

    \item[(c)] For $j\in N_3(i)$, It follows from the definition
      of $N_3(i)$ and equation \eqref{ni2} that
      \[
        e^{-n(l_j+\tau)}\le e^{-nt_{i+1}} < e^{-nt_i} \le
        e^{-nl_j},
      \]
      which implies that
      \[
        e^{-n(l_j+\tau)}\le r< e^{-nl_j}.
      \]
      In this direction, because of the assumption on $r$ and the
      separation of the ellipsoids, the ball intersects at most
      one ellipsoid.
    \end{enumerate}

    From above, the ball  $B$  intersects at most 
    \[
      \prod_{j\in N_1(i)}\frac{r}{e^{-nl_j}}\prod_{j\in
        N_2(i)\cup N_3(i)}1 = \prod_{j\in N_{1}(i)} \frac{r}{e^{-nl_j}}
    \]
    ellipsoids, and the intersection of $B$ and each of these
    ellipsoids is contained in a rectangle of sides about
    \[
      L_j=\left\{
        \begin{array}{ll}
          2e^{-n(\tau+l_j)} & \text{if  \ }j\in N_1(i)\cup
                              N_3(i)\,,\\[2ex]
          2r & \text{if \ }j\in N_2(i) \, .
        \end{array}
      \right.
    \]
    It follows that 
    \begin{align}
      \mu_n(B) &= c_d e^{nd\tau} \sum_{\substack{1\le k\le e^{Ln} \\ B
      \cap R_n^k \ne \varnothing}} \mu(R_n^k\cap B)
      \nonumber \\ 
               & \lesssim e^{nd\tau} \prod_{j\in N_1(i)}
                 e^{-n(\tau+l_j)} \frac{r}{e^{-nl_j}} \prod_{j\in
                 N_2(i)}r \prod_{j\in N_3(i)} e^{-n(\tau+l_j)}
                 \nonumber \\
               & = e^{n(\sum_{j\in N_2(i)} \tau - \sum_{j\in
                 N_3(i)}l_j)} \prod_{j\in N_1(i)\cup N_2(i)}r.
                 \label{munb3}
    \end{align}
  \end{enumerate}
  
  The estimate obtained in case (iii) above will now be discussed in
  two cases.

  \begin{enumerate}
  \item[(a)] If
    $\sum_{j\in N_2(i)}\tau-\sum_{j\in N_3(i)}l_j\ge 0$, then
    using $r<e^{-nt_i}$, the inequalities \eqref{munb3} will be
    rewritten as
    \[
      \mu_n(B)\le r^{s_{i,1}},
    \]
    where
    \[
      s_{i,1} \colon = \sum_{j\in N_1(i)\cup N_2(i)} 1 -
      \frac{1}{t_i} \Bigl(\sum_{j\in N_2(i)}\tau-\sum_{j\in
        N_3(i)}l_j \Bigr).
    \]
    Notice that
    \begin{align*}
      \mathcal{K}_1(t_i) &= N_1(i)\cup \{\, j : l_j=t_i \,\}, \\
      \mathcal{K}_2(t_i) &= N_2(i),\\
      \mathcal{K}_3(t_i) &= N_3(i)\setminus \{\, j :
                           l_j=t_i \,\}.
    \end{align*}
    Hence
    \begin{align}
      s_{i,1} &= \sum_{j\in \mathcal{K}_1 (t_i) \cup
                \mathcal{K}_2 (t_i)} 1 - \sum_{j :
                l_j=t_i}1 - \frac{1}{t_i} \bigl( \sum_{j\in
                \mathcal{K}_2(t_i)} \tau - \sum_{j\in
                \mathcal{K}_3 (t_i)} l_j - \sum_{j :
                l_j=t_i}l_j \bigr) \nonumber\\
              & = \sum_{j\in \mathcal{K}_1 (t_i) \cup
                \mathcal{K}_2 (t_i)} 1 - \frac{1}{t_i} \Bigl(
                \sum_{j\in \mathcal{K}_2(t_i)} \tau -
                \sum_{j\in \mathcal{K}_3(t_i)} l_j \Bigr).
                \label{si1}
    \end{align}
    This shows that $s_{i,1}$ is a term in
    \[
      \biggl\{ \sum_{j\in \mathcal{K}_1(t) \cup \mathcal{K}_2(t)
      } 1 + \frac{1}{t} \Bigl( \sum_{j\in \mathcal{K}_3(t)} l_j -
      \sum_{j \in \mathcal{K}_2(t)} \tau \Bigr) \biggr\}_{t \in
        \mathcal{A}},
    \]
    as given in Proposition~\ref{simple}.
    
  \item[(b)] If
    $\sum_{j\in N_2(i)} \tau - \sum_{j\in N_3(i)}l_j< 0$, by
    $r\ge e^{-nt_{i+1}}$ and inequalities \eqref{munb3}, then
    \[
      \mu_n(B)\le r^{s_{i,2}},
    \] 
    where
    \[
      s_{i,2}\colon =\sum_{j\in N_1(i)\cup
        N_2(i)}1-\frac{1}{t_{i+1}} \Bigl( \sum_{j\in
        N_2(i)}\tau-\sum_{j\in N_3(i)}l_j \Bigr).
    \]
    Notice that 
    \begin{align*}
      \mathcal{K}_1(t_{i+1}) & =N_1(i),\\
      \mathcal{K}_2(t_{i+1}) &= N_2(i)\cup  \{\, j :
                               l_j=t_{i+1} \,\},\\
      \mathcal{K}_3(t_{i+1}) &= N_3(i)\setminus \{\, j :
                               l_j=t_{i+1} \,\}.
    \end{align*}
    Hence
    \begin{align}
      s_{i,2} &= \sum_{j\in \mathcal{K}_1 (t_{i+1}) \cup
                \mathcal{K}_2 (t_{i+1})} 1 - \sum_{j :
                l_j=t_{i+1}} 1 - \nonumber \\
              & \hspace{2cm} - \frac{\sum_{j \in
                \mathcal{K}_2(t_{i+1})} \tau - \sum_{j \in
                \mathcal{K}_3(t_{i+1})} l_j - \sum_{j :
                l_j=t_{i+1}} l_j}{t_{i+1}} \nonumber\\
              &= \sum_{j \in \mathcal{K}_1 (t_{i+1}) \cup
                \mathcal{K}_2 (t_{i+1})} 1 -
                \frac{1}{t_{i+1}} \biggl(\sum_{j\in
                \mathcal{K}_2(t_{i+1})} \tau - \sum_{j\in
                \mathcal{K}_3 (t_{i+1})}l_j \biggr).
                  \label{si2}
    \end{align}
    Hence $s_{i,2}$ is also a term among those in
    Proposition~\ref{simple}.
  \end{enumerate}
    
  Let
  \[
    \tilde{s}(i)=\sum_{j\in \mathcal{K}_1(t_i)\cup
      \mathcal{K}_2(t_i)}1-\frac{1}{t_i} \biggl(\sum_{j\in
      \mathcal{K}_2(t_i)}\tau-\sum_{j\in
      \mathcal{K}_3(t_i)}l_j\biggr).
  \]
  Combining \eqref{si1} and \eqref{si2}, we get that
  \[
    \mu_n(B)\le r^{\min_{1\le i<d} \tilde{s}(i)}.
  \]
  
  Note that $\tilde{s}(d)=\frac{dl_d}{\tau+l_d}<d$, together
  with Cases (i), (ii) and Proposition~\ref{simple}, then we
  conclude that there is a constant $C$ such that
  \[
    \mu_n(B)\le C r^s,
  \]
  where 
  \[
    s=\min_{1\le j\le d} \biggl\{ \frac{jl_j+\sum_{i=j+1}l_i -
      (l_i - l_j - \tau)_+}{\tau+l_j} \biggr\}.
  \]
  
  Lemma~\ref{lemmaforlb} now finishes the proof, since $\mu_n$
  converges weakly to the Lebesgue measure on the torus if all
  eigenvalues are outside the unit circle.
\end{proof}

\section{Examples} \label{sec:examples}

Recall that by Theorems~\ref{the:dimensionofW2} and
\ref{upperboundofW},
\begin{align*}
  \min_k \Biggl\{ \frac{k l_ k + \sum_{j > k} l_j -
  \sum_{j = 1}^d (l_j - l_k - \tau)_+}{\tau + l_k} \Biggr\}
  & \leq \dimh W_\tau \\
  & \leq \min_k \left \{ \dfrac{kl_k + \sum_{j>k}
    l_j}{\tau + l_k} \right \},
\end{align*}
where the minimum in the upper bound is over those $k$ for which
$\tau+l_k>0$, and the lower bound is only valid if all
eigenvalues are outside the unit circle. We expect that the lower
bound holds in more cases than those given by
Theorem~\ref{the:dimensionofW2}, but this is not always the case,
which we will show in several examples.

Here we will consider tori of several different dimensions and
compare the corresponding sets $W_\tau$. To keep them apart, we
will use the notation $W_\tau(T)$, where
$T \colon \mathbb{T}^d \to \mathbb{T}^d$.

\begin{example}
  The first example is a transformation $T$ on $\mathbb{T}^4$ and
  another related transformation $S$ on $\mathbb{T}^2$. For any
  $m\in\mathbb{N}$, let
  \[
    A_m = \begin{pmatrix}m+1 & m \\ 1 & 1 \end{pmatrix}.
  \]
  Then, $\det A_m=1$ and $\tr A_m=m+2$. The eigenvalues of $A_m$
  are
  \[
    \lambda_{m,-} = \dfrac{m}{2} + 1 - \sqrt{\dfrac{m^2}{4} + m},
    \qquad \lambda_{m,+} = \dfrac{m}{2} + 1 +
    \sqrt{\dfrac{m^2}{4} + m}.
  \]
  Take $m>1$ and put
  \[
    A = \begin{pmatrix} 2 & 1 & 0 \\ 1 & 1 & 0 \\ 0 & 0 &
      A_m \end{pmatrix} = \begin{pmatrix} A_1 & 0 \\ 0 &
      A_m \end{pmatrix}.
  \]
  Thus, $A$ is an integer matrix with no eigenvalues on the unit
  circle and $\det A=1$. The transformation
  $T \colon \mathbb{T}^4 \to \mathbb{T}^4$ defined by
  \[
    T(x) = Ax \mod \quad 1
  \]
  have four Lyapunov exponents
  \[
    l_1 = \log \lambda_{m,-}, \quad l_2 = \log\dfrac{3 -
      \sqrt{5}}{2}, \quad l_3 = \log\dfrac{3 + \sqrt{5}}{2},
    \quad l_4 = \log\lambda_{m,+}.
  \]
  Therefore, $l_1<l_2<0<l_3<l_4$, $l_1+l_4=0$ and
  $l_1+l_2+l_3+l_4=0$. We will consider $W_\tau(T)$ and prove
  that we do not have equality in Theorem~\ref{upperboundofW} for
  $W_\tau(T)$.
  
  Consider the transformation
  $S \colon \mathbb{T}^2 \to \mathbb{T}^2$ defined by
  $S(x)=A_mx \mod 1$. The dimension
  formula in Theorem~\ref{the:dimensionofW1} tells us that
  \begin{equation*}
    \dimh W_\tau(S)=\left\{
      \begin{array}{ll}
        \dfrac{2l_4}{\tau+l_4}, & \tau\in[0,l_4), \\
        \dfrac{l_1+l_4}{\tau+l_1}=0, & \tau\in[l_4,\infty).
                                       \rule{0pt}{22pt}
      \end{array}\right.
  \end{equation*}

  We shall now compare with $W_\tau (T)$. In this case, the upper
  bound from Theorem~\ref{upperboundofW} says that
  \begin{equation*}
    \dimh W_\tau(T) \leq \left\{
      \begin{array}{ll}
        \dfrac{4 l_4}{\tau + l_4},
        & \tau \in [0,l_4/3), \\
        \dfrac{3l_3+l_4}{\tau+l_3}=0,
        & \tau\in \biggl[ \dfrac{l_4}{3},\dfrac{l_3 + l_4}{2}
          \biggr), \rule{0pt}{22pt} \\
        \dfrac{2l_2 + l_3 + l_4}{\tau + l_2} = \dfrac{l_2 +
        l_4}{\tau + l_2},
        & \tau \in \biggl[ \dfrac{l_3 +
          l_4}{2},l_2 + l_3 + l_4 \biggr), \rule{0pt}{22pt} \\
        \dfrac{l_1 + l_2 + l_3 + l_4}{\tau + l_1}=0,
        & \tau\in[l_2 + l_3 + l_4, \infty). \rule{0pt}{22pt}
    \end{array}\right.
  \end{equation*} 

  In fact, for $\tau \in \bigl(\frac{l_3+l_4}{2},l_4\bigr)$ and
  $z_n=0$, the set $E_n=T^{-n}B(0,e^{-n\tau})$ is a thin and long
  ellipsoid that is wrapped around the torus.  This ellipsoid's
  three shortest semi axes shrink with an exponential speed in
  $n$ and the largest grows. Therefore, we have
  \[
    W_\tau(T)=(0,0) \times W_\tau(S).
  \]
  Thus,
  \[
    \dimh W_\tau(T) = \dimh W_\tau(S) = \dfrac{2 l_4}{\tau +
      l_4},
  \]
  for this $\tau$.  The upper bound for this $\tau$ however is
  \[
    \dimh W_\tau(T) \leq \dfrac{l_2 + l_4}{\tau + l_2}.
  \]
  It is clear that
  \[
    \dfrac{l_2+l_4}{\tau+l_2}\neq\dfrac{2l_4}{\tau+l_4},
  \]
  unless $\tau = l_4$, and hence the upper bound is not the true
  value of the Hausdorff dimension in general.

  When $\tau$ is close to $l_4$, then it is clear that the upper
  and lower bounds from Theorems~\ref{the:dimensionofW2} and
  \ref{upperboundofW} coincide, and hence the lower bound is not
  valid in this case.

  Note that for the range of $\tau$ considered above, the measure
  $\mu_n$ does not converge weakly to the Lebesgue measure on
  $\mathbb{T}^4$, but it does converge weakly to the Lebesgue measure
  on $(0,0) \times \mathbb{T}^2$.
\end{example}

\begin{example}
  \label{ex:last}
  The second example is a transformation $T$ on
  $\mathbb{T}^3$. For any integer $m>2$,
  let
  \[
    A = \begin{pmatrix} m & 0 & 0 \\ 0 & 2 & 1 \\ 0 & 1 &
      1 \end{pmatrix}.
  \]
  The eigenvalues
  are
  \[
    m, \quad \dfrac{3+\sqrt{5}}{2}, \quad \dfrac{3-\sqrt{5}}{2}.
  \]
  Hence,
  \[
    l_1 = \log \dfrac{3 - \sqrt{5}}{2}, \quad l_2 =
    \log\dfrac{3+\sqrt{5}}{2}, \quad l_3 = \log m.
  \]
  Thus, $l_1<0<l_2<l_3$ and $l_1+l_2+l_3=l_3=\log m$. 
  
  Take points $(a_n,b_n,c_n) \in \mathbb{T}^3$ and consider
  $\tau > l_2$. Let $z_n$ be such that
  $z_n = T^n (a_n,b_n,c_n)$. Since $\tau > l_2$, we have that
  $T^{-n} (B(z_n, e^{-\tau n}))$ is contained in the set
  \[
    \mathbb{T} \times (b_n -
    e^{-(\tau-l_2)n} , b_n + e^{-(\tau-l_2)n}) \times (c_n -
    e^{-(\tau-l_2)n} , c_n + e^{-(\tau-l_2)n}).
  \]
  It is therefore possible to choose $(b_n,c_n)$ such that
  $W_\tau = \emptyset$. In particular, for such a choice of
  $(b_n,c_n)$ we have $\dimh W_\tau = 0$ for $\tau > l_2$.  In
  this case, the result of Hill and Velani
  \cite[Theorem~1]{HillVelaniMatrix} is not correct. Note also
  that this shows that the lower bound in
  Theorem~\ref{the:dimensionofW2} does not hold in general, since
  it gives a positive dimension in this case.

  As in the previous example, we note that for $\tau > l_2$, the
  measure $\mu_n$ does not converge weakly to the Lebesgue
  measure on $\mathbb{T}^3$, but if $(b_n,c_n) \to (b,c)$, then
  it converges weakly to the Lebesgue measure on
  $\mathbb{T} \times (b,c)$.
\end{example}

\bibliographystyle{amsplain}

\begin{thebibliography}{10}

\bibitem{BS2001} L.~Barreira and B.~Saussol, \emph{Hausdorff
    dimension of measures via {P}oincar\'{e} recurrence},
  Comm. Math. Phys. \textbf{219} (2001), no.~2, 443--463.

\bibitem{BeresnevichVelani} V. Beresnevich, S. Velani, \emph{A
    mass transference principle and the Duffin--Schaeffer
    conjecture for Hausdorff measures}, Ann. of Math. (2)
  \textbf{164} (2006), no. 3, 971--992.
  
\bibitem{B1934} A.~Besicovitch, \emph{Sets of {F}ractional
    {D}imensions ({IV}): {O}n {R}ational {A}pproximation to
    {R}eal {N}umbers}, J. London Math. Soc. \textbf{9} (1934),
  no.~2, 126--131.

\bibitem{B1993} M.~D. Boshernitzan, \emph{Quantitative recurrence
    results}, Invent. Math.  \textbf{113} (1993), no.~3,
  617--631.

\bibitem{BL2016} Y.~Bugeaud and L.~Liao, \emph{Uniform
    {D}iophantine approximation related to {$b$}-ary and
    {$\beta$}-expansions}, Ergodic Theory Dynam. Systems
  \textbf{36} (2016), no.~1, 1--22.

\bibitem{Ca1957} J.~W.~S. Cassels, \emph{An introduction to
    {D}iophantine approximation}, Cambridge Tracts in Mathematics
  and Mathematical Physics, No. 45, Cambridge University Press,
  New York, 1957.

\bibitem{CK2001} N.~Chernov and D.~Kleinbock, \emph{Dynamical
    {B}orel-{C}antelli lemmas for {G}ibbs measures}, Israel
  J. Math. \textbf{122} (2001), 1--27.
  
  \bibitem{ew1999} G. Everest and T. Ward, 
\emph{ Heights of polynomials and entropy in algebraic dynamics},
Springer-Verlag London, Ltd., London, 1999.

\bibitem{F1988} K.~J.~Falconer, \emph{The Hausdorff dimension of
    self-affine fractals}, Math.  Proc. Cambridge
  Philos. Soc. \textbf{103} (1988), no.~2, 339--350.

\bibitem{F1994} \bysame, \emph{Sets with large intersection
    properties}, J. London Math. Soc.  (2) \textbf{49} (1994),
  no.~2, 267--280.

\bibitem{F2006} B.~Fayad, \emph{Mixing in the absence of the
    shrinking target property}, Bull.  London
  Math. Soc. \textbf{38} (2006), no.~5, 829--838.

\bibitem{G2005} S.~Galatolo, \emph{Dimension via waiting time and
    recurrence}, Math. Res. Lett.  \textbf{12} (2005), no.~2-3,
  377--386.

\bibitem{HV1995} R.~Hill and S.~L. Velani, \emph{The ergodic
    theory of shrinking targets}, Invent. Math. \textbf{119}
  (1995), no.~1, 175--198.

\bibitem{HillVelaniMatrix} \bysame, \emph{The
    shrinking target problem for matrix transformations of tori},
  J. London Math. Soc. (2) \textbf{60} (1999), no. 2, 381--398.

\bibitem{HuPersson} Z.-N. Hu, T. Persson, \emph{Hausdorff
    dimension of recurrence sets}, Nonlinearity 37, 2024, 055010.
  
\bibitem{J1929} V.~Jarn\'{\i}k, \emph{Diophantische
    approximationen und hausdorffsches mass}, Rec. Math. Moscou
  \textbf{36} (1929), 371--382.

\bibitem{Lietal} B.~Li, L.~Liao, S.~Velani, E.~Zorin, \emph{The
    Shrinking Target Problem for Matrix Transformations of Tori:
    revisiting the standard problem}, Adv. Math. 421 (2023),
  Paper No. 108994, 74 pp.
  
\bibitem{MK} M.~Mukherjee, G.~Karner, \emph{Irrational numbers of
    constant type --- a new characterization}, New York J. Math. 4
  (1998), 31--34.
  
\bibitem{P2015} T.~Persson, \emph{A note on random coverings of
    tori}, Bull. Lond. Math. Soc.  \textbf{47} (2015), no.~1,
  7--12.
  
\bibitem{P2022} T.~Persson, \emph{A mass transference principle
    and sets with large intersections}, Real Anal. Exchange
  \textbf{47} (2022), no. 1, 191–205.


\bibitem{PR2015} T.~Persson and H.~Reeve, \emph{A {F}rostman-type
    lemma for sets with large intersections, and an application
    to diophantine approximation}, Proc. Edinb.  Math. Soc. (2)
  \textbf{58} (2015), no.~2, 521--542.

\bibitem{VG1979} V.~G. Sprind\v{z}uk, \emph{Metric theory of
    {D}iophantine approximations}, V.  H. Winston \& Sons,
  Washington, D.C.; A Halsted Press Book, John Wiley \& Sons, New
  York-Toronto, Ont.-London, 1979, Translated from the Russian
  and edited by Richard A. Silverman, With a foreword by Donald
  J. Newman, Scripta Series in Mathematics.

\bibitem{T2008} J.~Tseng, \emph{On circle rotations and the
    shrinking target properties}, Discrete
  Contin. Dyn. Syst. \textbf{20} (2008), no.~4, 1111--1122.
  
\bibitem{WT} B. Tan, B. Wang, \emph{Quantitative recurrence
    properties for beta-dynamical system},
  Adv. Math. \textbf{228} (4) (2011), no. 4, 2071--2097.

\bibitem{WangWu} B. Wang, J. Wu, \emph{Mass transference
    principle from rectangles to rectangles in Diophantine
    approximation}, Math. Ann. \textbf{381} (2021), 243--317.

\bibitem{WangWuSurvey} B. Wang, J. Wu, \emph{A survey on the dimension
    theory in dynamical Diophantine approximation}, Recent
  developments in fractals and related fields, 261--294, Trends
  Math., Birkhäuser/Springer, Cham, 2017.
  
\end{thebibliography}

\end{document}